\def\barf{{\overline {f}}}
\def\barg{{\overline {g}}}
\def\fb{{f_{\overline{z}}}}
\def\gb{{g_{\overline{z}}}}
\def\div{{\rm div}}
\def\Re{\mathfrak{Re}}
\newcommand{\p}{{$p\mspace{1mu}$}}
\begin{document}
\long\def\symbolfootnote[#1]#2{\begingroup%
\def\thefootnote{\fnsymbol{footnote}}\footnote[#1]{#2}\endgroup}

\title {The geometry of planar \p-harmonic mappings: convexity,  level curves and the isoperimetric inequality
}
\author{ Tomasz Adamowicz
}
\date{}
\maketitle

\theoremstyle{plain}
\newtheorem{definition}{Definition}
\newtheorem{theorem}{Theorem}
\newtheorem{lem}{Lemma}
\newtheorem{cor}{Corollary}
\newtheorem{ex}{Example}
\newtheorem{observ}{Observation}
\newtheorem{opprb}{\bf Open problem}

\theoremstyle{definition}
\newtheorem{rem}{Remark}

\newcommand{\Om}{\Omega}
\newcommand{\R}{\mathbb{R}}
\newcommand{\kom}[1]{}
\renewcommand{\kom}[1]{{\bf [#1]}}

\begin{abstract}
 We discuss various representations of
planar \p-harmonic systems of equations and their solutions. For
coordinate functions of \p-harmonic maps we analyze signs of their
Hessians, the Gauss curvature of \p-harmonic surfaces, the length of
level curves as well as we discuss curves of steepest descent. The
isoperimetric inequality for the level curves of coordinate functions of planar \p-harmonic maps is proven. Our main
techniques involve relations between quasiregular maps and planar
PDEs. We generalize some results due to P.~Lindqvist,
G.~Alessandrini, G.~Talenti and P.~Laurence.
\newline
\newline \emph{Keywords}: \p-harmonic mapping, complex gradient, generalized analytic functions,
 quasilinear system, quasiregular mapping, Hessian, Gauss curvature, convex surface, level curves, isoperimetric inequality.
\newline
\newline
\emph{Mathematics Subject Classification (2000)}: 35J47, 35J70, 35J92, 30C65, 35J45
\end{abstract}
\section{Introduction}
In this note we discuss the geometry of solutions to a \p-harmonic
system of equations in the plane. That is, for a map
$u=(u^1,u^2):\Omega\subset \mathbb{R}^2\rightarrow \mathbb{R}^2$ and
$1<p<\infty$ we will investigate the following nonlinear system of
equations:
\begin{equation*}\label{system_1}
\div(|Du|^{p-2}Du)=0,
\end{equation*}
where $Du$ stands for the Jacobi matrix of $u$ and $|Du|^2=|\nabla u^1|^2+|\nabla u^2|^2$. If a solution exists
it is called a \p-harmonic map. The system originates from the
Euler-Lagrange system for the energy $\int_{\Om} |Du|^p$ and
therefore, the natural domain of definition for solutions is the
Sobolev space $W^{1, p}_{loc}(\Om, \R^n)$. However, in the
discussion below we will deal mainly with $C^2$-regular maps.
Equivalently, this system can be written as follows.
\begin{equation}\label{system_2}
 \left\{
\begin{array}{l}
{\rm div}(|Du|^{p-2}\nabla u^1)= 0 \\
\\
{\rm div}(|Du|^{p-2}\nabla u^2)= 0.
\end{array} \right.
\end{equation}
Furthermore, for $p=2$ the system reduces to the harmonic one and so from that point
of view \p-harmonic maps are the nonlinear counterparts of harmonic
transformations. On the other hand, if map $u$
degenerates to a single function $u=(u^1, 0)$, we retrieve from (\ref{system_2})
the classical \p-harmonic equation $\div (|\nabla u^1|^{p-2}\nabla u^1)=0$. In spite of similarity to the definition of the
\p-harmonic equation, the \p-harmonic system is far more
complicated, as the component functions are tangled together by the
appearance of $Du$ in both equations. This property, together with
degeneracy of the system at points where $Du=0$ makes the analysis
of \p-harmonics difficult and challenging.\par
  The \p-harmonic operators and systems arise naturally in a variety of applications
e.g. in nonlinear elasticity theory \cite{iko11, io11}, nonlinear
fluid dynamics \cite{am05, dpr06}, as well as in cosmology or
climate sciences and several other areas (see e.g. \cite{a1} and
references therein). In pure mathematics the \p-harmonic maps appear
for instance in differential geometry \cite{hl95, wa05, we08} or in
relation to differential forms and quasiregular maps
\cite{bonkhein}.
 In what follows we will confine our discussion to maps between planar domains. The reason for this is twofold. First, already in the two dimensional setting computations for nonlinear Laplace operators are complicated and in higher dimensions the complexity increases even further. The second reason is related to the fact that we will often appeal to relations between the complex gradients of coordinate functions
of a \p-harmonic map and quasiregular maps. Such relations known for
planar \p-harmonic equation \cite{bi, 3} has been recently
established also in the setting of \p-harmonic maps (see \cite{a1}
or discussion in Section 2 and Appendix \ref{App-system}). The
corresponding relations between nonlinear PDEs and quasiregular maps
beyond the plane remains an open problem.

We recall that in the planar case  quasiregular map can be defined in terms of the Beltrami coefficient $\mu$. Namely, a map $F$ is quasiregular if there exists a constant $k$ such that
\begin{equation}\label{beltrami}
 |\mu|=\frac{|F_{\overline{z}}|}{|F_z|}\leq k<1 \quad \hbox{ a.e. in } \Om.
\end{equation}
For the equivalent definitions of quasiregular maps and further information on this topic we refer to e.g. \cite{lv73}, \cite[Chapter
14]{hkm}, \cite[Chapter 3]{6}. Other properties of quasiregular maps needed in our presentation
will be recalled throughout the discussion.\par

The subject of our interest will be the geometry of \p-harmonic
surfaces, that is the geometry of the graphs of coordinate functions
of planar \p-harmonic maps. The main difficulty lies in the fact
that functions $u^1$ and $u^2$ are coupled by $Du$, and so many of our
estimates involve both coordinates and depend on the Jacobi matrix norm $|Du|$.\par
In Section 2 we recall and introduce various representations of the \p-harmonic operator and
\p-harmonic system needed in further sections, as depending on the
discussed problem we will adopt different points of view on
\p-harmonicity.\par In Section 3 we show that for some range of
parameter $p$ the positivity of Hessian determinant for one
coordinate function of $u$ implies that the second Hessian
determinant is negative. Such a phenomenon has not been noticed
before for \p-harmonic maps. From this observation we infer number
of conclusions regarding convexity of coordinate functions and their
level sets and the Gauss curvature of the corresponding surfaces. In
the latter case, we generalize work of Lindqvist \cite{lin1} on
\p-harmonic surfaces. Using the class of radial maps we illustrate
Section 3 by example postponed to Appendix \ref{radial-hessian} due
to complexity and technical nature. \par Section 4 is devoted to
studying the curvature of level curves. Following the ideas of
Alessandrini \cite{al} and Lindqvist \cite{lin1} we prove Theorem
\ref{k-lemma} providing the local estimates of lengths of a level
curves of $u^1$ and $u^2$. To our best knowledge such estimates in
the nonlinear vectorial setting are not present in the literature so far.
\par We continue investigation of level curves in Section 5, where
basing on the work of Talenti \cite{tal} we discuss level curves of
steepest descent and provide some estimates for the curvature
functions involving both the level curves and their orthogonal
trajectories. Results in Sections 3-5 are based on techniques
developed in earlier work by the author \cite{a1} and therefore, for
the sake of completeness and for the readers convenience we recall
the necessary results from \cite{a1} in Appendix \ref{App-system}.
There we also extend some of the estimates from \cite{a1} due to
$C^2$ assumption on \p-harmonic maps.\par Section 6 contains
discussion of an isoperimetric inequality for \p-harmonic maps and
generalizes works of Laurence \cite{la} and Alessandrini \cite{al2}
to the setting of vector transformations. Again, in the setting of
systems of coupled differential equations such a result is new.\par

We believe that our approach based on mixture of complex analysis,
theory of quasiregular maps and PDEs techniques can be extended to
some other nonlinear systems of equations in the plane.

\section{Representations of \p-harmonic equations and
systems}\label{section2}
 In this section we recall and develop various representation formulas for
 \p-harmonic operator and system in the plane.
 The presentation is of mainly technical nature and the results here will serve as auxiliary
 tools for the discussion in the following sections. Also, our goal is to compare
 \p-harmonic transformations with their scalar counterparts (\p-harmonic
 functions) and, therefore, illustrate the differences between these cases. We would like to emphasize that despite formal similarity to the scalar case one should expect new phenomena in the setting of \p-harmonic mappings.\par
 Let $u=(u^1,u^2):\Omega \subset \mathbb{R}^2\rightarrow \mathbb{R}^2$ be a planar \p-harmonic mapping. Assume that
  $u\in~\!C^2(\Omega)$.
 Following notation in \cite{a1} we denote by $f,g$ the complex
 gradients of the first and the second coordinate function of $u$,
 respectively.
\begin{equation}\label{complex}
f\,=\,\frac{1}{2}\,(u^1_x-\,i\,u^1_y),\quad
g\,=\,\frac{1}{2}\,(u^2_x-\,i\,u^2_y).
\end{equation}
In what follows we will frequently appeal to the following equations
for $|f|, |g|, f_z, g_z, \fb, \gb$.
\begin{align}\label{cpx-grad}
 |f|&=\tfrac12|\nabla u^1|,\qquad  |g|=\tfrac12|\nabla u^2|,\nonumber \\
 f_z&=\tfrac{1}{4}\left(u_{xx}^1-u_{yy}^1-2iu_{xy}^1\right),\qquad  \fb=\overline{\fb}=\tfrac{1}{4}\left(u_{xx}^1+u_{yy}^1\right) =\tfrac14 \Delta u^1, \\
 g_z&=\tfrac{1}{4}\left(u_{xx}^2-u_{yy}^2-2iu_{xy}^2\right),\qquad \gb=\overline{\gb}=\tfrac{1}{4}\left(u_{xx}^2+u_{yy}^2\right)  =\tfrac14 \Delta u^2.\nonumber
\end{align}
Next, we define the \p-harmonic operator and express it by using the
complex notation. Let $\Omega\subset \mathbb{R}^2$ and $v\in
C^2(\Omega,\mathbb{R})$ be a Sobolev function for a given $1<p<\infty$.
The following differential operator is called the scalar \p-harmonic operator:
\begin{align}
\Delta_p v&=\div(|\nabla v|^{p-2}\nabla v) \nonumber \\
 &=|\nabla v|^{p-4}\big(|\nabla v|^2\Delta v\,+\,(p-2)(\,(v_x)^2v_{xx}+2v_xv_yv_{xy}+(v_y)^2v_{yy}\,)\big)  \label{phf} \\
&=|\nabla v|^{p-4}\big(|\nabla v|^2\Delta
v\,+\,\tfrac{p-2}{2}\langle \nabla v, \nabla |\nabla
v|^2\rangle\big). \nonumber
\end{align}
We will also need the complex formulation of a scalar \p-harmonic
operator in the plane. Using (\ref{complex}) with (\ref{phf}) (with the abuse of notation that here $f=\frac{1}{2}\,(v_x-\,i\,v_y)$) we obtain that
\begin{align}
 \div(|\nabla v|^{p-2}\nabla v)&=\tfrac{\partial}{\partial x}\left(|\nabla v|^{p-2}v_x\right)+\tfrac{\partial}{\partial y}\left(|\nabla v|^{p-2}v_y\right) \nonumber \\
&=\tfrac{\partial}{\partial x}\left(2^{p-2}|f|^{p-2}(f+\barf)\right)+i\tfrac{\partial}{\partial y}\left(2^{p-2}|f|^{p-2}(f-\barf)\right) \nonumber \\
&=2^p \Re\left(|f|^{p-2}f\right)_{\overline{z}} =2^{p-1}\left(\,\left(|f|^{p-2}f\right)_{\overline{z}}+\overline{\left(|f|^{p-2}f\right)_{\overline{z}}}\,\right)\nonumber \\
&=2^{p-2}|f|^{p-2}\left(2p\fb +
(p-2)\left(\tfrac{f}{\barf}\overline{f_z}+\tfrac{\barf}{f}f_z\right)\right).\label{cpx-phf}
\end{align}
Let us now focus our attention on \p-harmonic maps. Using the
definition of the \p-harmonic operator (\ref{phf}) the following
form of the \p-harmonic system (\ref{system_2}) in the plane can be
established at the points where $\nabla u^1\not =0$ and $\nabla
u^2\not =0$:
\begin{equation}\label{grad_rep}
 \left\{
\begin{array}{l}
|\nabla u^2|^{p-4}\Delta_pu^1\,+\,|\nabla u^1|^{p-4}{\rm div}(|\nabla u^2|^{p-2}\nabla u^1)\,=\,0 \\
\\
|\nabla u^1|^{p-4}\Delta_pu^2\,+\,|\nabla u^2|^{p-4}{\rm
div}(|\nabla u^1|^{p-2}\nabla u^2)\,=\,0.
\end{array} \right.
\end{equation}
\begin{rem}
 The analogous representation can be stated in any dimension $n\geq 2$.
 However, here we confine our discussion to the case $n=2$ only.
\end{rem}
\noindent Equivalently, system (\ref{grad_rep}) can also be written
as follows.
\begin{equation}\label{grad_rep2}
 \left\{
\begin{array}{l}
|\nabla u^1|^{4-p}\Delta_pu^1\,+\,|\nabla u^2|^2\Delta u^1\,+\,\frac{p-2}{2}\langle\,\nabla u^1, \nabla |\nabla u^2|^2\,\rangle\,=\,0 \\
\\
|\nabla u^2|^{4-p}\Delta_pu^2\,+\,|\nabla u^1|^2\Delta
u^2\,+\,\frac{p-2}{2}\langle\,\nabla u^2, \nabla |\nabla
u^1|^2\,\rangle\,=\,0.
\end{array} \right.
\end{equation}
We will sketch the proof only for the first equation of system (\ref{grad_rep}) as the derivation of the second equation goes along the same lines.
\begin{align}
\div(|Du|^{p-2}\nabla u^1)=\tfrac{\partial}{\partial x}\left((|\nabla u^1|^2+|\nabla u^2|^2)^{\frac{p-2}{2}}u^1_{x}\right)+\tfrac{\partial}{\partial y}\left((|\nabla u^1|^2+|\nabla u^2|^2)^{\frac{p-2}{2}}u^1_{y}\right)&=0\nonumber \\
|\nabla u^1|^2\Delta u^1+|\nabla u^2|^2\Delta
u^1+\tfrac{p-2}{2}\left(\left(|\nabla u^1|^2\right)_x u^1_x+
\left(|\nabla u^2|^2\right)_x u^1_x +\left(|\nabla u^1|^2\right)_y u^1_y+\left(|\nabla u^2|^2\right)_y u^1_y\right)&=0\nonumber \\
|\nabla u^1|^2\Delta u^1+\tfrac{p-2}{2}\langle \nabla u^1,
\nabla|\nabla u^1|^2\rangle +
|\nabla u^2|^2\Delta u^1+\tfrac{p-2}{2}\langle \nabla u^1, \nabla|\nabla u^2|^2\rangle&=0 \nonumber \\
 |\nabla u^2|^{p-4}\Delta_pu^1 + |\nabla u^1|^{p-4}(|\nabla u^2|^{p-2}\Delta u^1+(p-2)|\nabla u^2|^{p-4})&=0 \nonumber \\
 |\nabla u^2|^{p-4}\Delta_pu^1 + |\nabla u^1|^{p-4}\left(\tfrac{\partial}{\partial x}(|\nabla u^2|^{p-2}u^1_x)+\tfrac{\partial}{\partial y}(|\nabla u^2|^{p-2}u^1_y)\right)&=0. \label{phm-repr-sec}
\end{align}
Using the definition of the divergence operator we arrive at the
first equation of (\ref{grad_rep}). Multiplying (\ref{phm-repr-sec})
by $|\nabla u^1|^{4-p}|\nabla u^2|^{4-p}$  we produce the first
equation of (\ref{grad_rep2}). Similar computations allow us to
obtain second equation of (\ref{grad_rep}) and (\ref{grad_rep2}),
respectively.\par The complex notation and the connection between
PDEs in the plane and functions of complex variable is nowadays
classical and has become very fruitful and brought lots of insight
into both complex analysis and theory of differential equations, to
mention for instance the theory of Beltrami equation or the theory
of generalized analytic functions (see e.g. \cite{6, ve}). It turns
out that also in the setting of \p-harmonic systems such relations
can be discovered. Indeed, in \cite{a1} we proved that $f$ and $g$
satisfy the following system equivalent to (\ref{system_2}). We will
frequently appeal to this result and its consequences in further
sections.\par
\begin{theorem}[Theorem 1, \cite{a1}]\label{jeden}
 For $1<p<\infty$ let $u=(u^1, u^2)$ be a $C^2(\Omega,\mathbb{R}^2)$ \p-harmonic
 mapping. Consider complex gradients $f,g$ of coordinate functions
 $u^1,u^2$, respectively (eqs. (\ref{complex})). We have the following system of quasilinear equations.
 \begin{equation}\label{system}
\left\{
\begin{array}{ll}
  \!\!\bigg(2p+\frac{4|g|^2}{|f|^2}\bigg)f_{\overline{z}} &\!\!\! =
  (2-p)\bigg(\,\frac{\overline f}{f}\,f_{z}\,+\,\frac{f}{\overline f}\,\overline{f_{z}}\,\bigg) \\
&\!\! +\, (2-p)\bigg[\frac{\overline g}{f}\,g_{z}+\frac{g}{\overline
f}\overline{g_{z}}+\bigg(\frac{\overline{g}}{\overline{f}}+\frac{g}{f}
\bigg)g_{\overline{z}} \bigg]  \\
\bigg(2p+\frac{4|f|^2}{|g|^2}\bigg)g_{\overline{z}} &\!\!\! =
(2-p)\bigg(\,\frac{\overline g}{g}\,g_{z}
+\frac{g}{\overline g}\,\overline{g_{z}}\,\bigg) \\
&\!\! + (2-p)\bigg[\frac{\overline f}{g}\,f_{z}\,
+\frac{f}{\overline
g}\overline{f_{z}}+\bigg(\frac{\overline{f}}{\overline{g}}+\frac{f}{g}
\bigg)f_{\overline{z}} \bigg]
\end{array} \right.
\end{equation}
at the points where $f\not=0$ and $g\not=0$.
\end{theorem}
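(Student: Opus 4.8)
The plan is to obtain (\ref{system}) directly from the \p-harmonic system (\ref{system_2}) by passing to the complex-gradient variables $f,g$ and running the same computation that produced the scalar identity (\ref{cpx-phf}), but carrying both coordinate functions along. Throughout we work at a point where $f\neq 0$ and $g\neq 0$, so that the quotients below make sense and $|Du|^{p-2}$ is smooth there.

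First I would collect the elementary identities. By (\ref{cpx-grad}) we have $|Du|^2=|\nabla u^1|^2+|\nabla u^2|^2=4(|f|^2+|g|^2)$, hence $|Du|^{p-2}=2^{p-2}S^{(p-2)/2}$ where $S:=|f|^2+|g|^2$; moreover $u^1_x=f+\overline f$, $u^1_y=i(f-\overline f)$, and---this will be used repeatedly---$\fb=\tfrac14\Delta u^1$ and $\gb=\tfrac14\Delta u^2$ are \emph{real}. Writing $\partial_x=\partial_z+\partial_{\overline z}$ and $\partial_y=i(\partial_z-\partial_{\overline z})$ and repeating the first lines of the computation in (\ref{cpx-phf}) with $|f|$ replaced by $S^{1/2}$, one gets
\begin{equation*}
\div\big(|Du|^{p-2}\nabla u^1\big)=2^{p}\,\Re\big((S^{(p-2)/2}f)_{\overline z}\big),
\end{equation*}
and the analogous identity with $f$ replaced by $g$ for the second equation. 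Thus (\ref{system_2}) is equivalent to $\Re\big((S^{(p-2)/2}f)_{\overline z}\big)=0$ together with $\Re\big((S^{(p-2)/2}g)_{\overline z}\big)=0$.

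Next I would expand the first identity. By the product and chain rules,
\begin{gather*}
(S^{(p-2)/2}f)_{\overline z}=S^{(p-4)/2}\Big(S\fb+\tfrac{p-2}{2}\,f\,S_{\overline z}\Big),\\
S_{\overline z}=(f\overline f+g\overline g)_{\overline z}=\overline f\,\fb+f\,\overline{f_z}+\overline g\,\gb+g\,\overline{g_z},
\end{gather*}
where we used $(\overline f)_{\overline z}=\overline{f_z}$. Writing $2^{p}\Re(w)=2^{p-1}(w+\overline w)$, adding the complex conjugate, dividing out the positive factor $2^{p-1}S^{(p-4)/2}$, and invoking the reality of $\fb,\gb$ (so that they pass through $\Re$ as scalars and the conjugate pairs collapse), the first equation becomes
\begin{multline*}
2S\fb+\tfrac{p-2}{2}\Big(2|f|^2\fb+f^2\overline{f_z}+\overline f^2 f_z\\
+\gb\,(f\overline g+\overline f g)+fg\,\overline{g_z}+\overline f\,\overline g\,g_z\Big)=0.
\end{multline*}
Collecting the $\fb$-terms yields the coefficient $2S+(p-2)|f|^2=p|f|^2+2|g|^2$; transferring the remaining terms to the right, multiplying by $2/|f|^2$, and rewriting the quotients via $f^2/|f|^2=f/\overline f$, $f\overline g/|f|^2=\overline g/\overline f$, $fg/|f|^2=g/\overline f$, $\overline f g/|f|^2=g/f$ and $\overline f\,\overline g/|f|^2=\overline g/f$ turns this into exactly the first line of (\ref{system}). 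The second line follows in the same way after interchanging $u^1\leftrightarrow u^2$, equivalently $f\leftrightarrow g$.

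I do not anticipate a genuine obstruction here: the argument is bookkeeping in the Wirtinger calculus, and the only delicate points are keeping track of the powers of $2$ and invoking the reality of $\fb,\gb$ at the right moments (a sign slip or a missing factor is the likeliest pitfall). The one structural observation worth stating explicitly is that the ``gradient length'' governing both equations is the \emph{combined} quantity $S=|f|^2+|g|^2=\tfrac14|Du|^2$, which is precisely what couples the two coordinate functions in (\ref{system}) and which also lets one recycle almost all of the scalar derivation (\ref{cpx-phf}). A convenient consistency check is that setting $g\equiv 0$ collapses (\ref{system}) to the complex form of the scalar \p-harmonic equation read off from (\ref{cpx-phf}).
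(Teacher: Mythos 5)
Your derivation is correct: the identity $\div(|Du|^{p-2}\nabla u^1)=2^{p}\,\Re\big((S^{(p-2)/2}f)_{\overline z}\big)$ with $S=|f|^2+|g|^2$, the expansion of $S_{\overline z}$, the collection of the $\fb$-terms into the coefficient $2S+(p-2)|f|^2=p|f|^2+2|g|^2$, and the final multiplication by $2/|f|^2$ all check out and land exactly on (\ref{system}), with the $g$-equation following by the symmetry $f\leftrightarrow g$. This is essentially the same computation the paper relies on (the proof is deferred to Theorem 1 of \cite{a1}; your argument is the vectorial extension of the scalar calculation (\ref{cpx-phf}) carried out in Section 2), so nothing further is needed.
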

\begin{rem}
 In \cite{a1} the above result is proven for $p\geq 2$. Here,
the $C^2$ assumption on $u$ allows us to extend theorem to the whole
range of $1<p<\infty$ (see Appendix \ref{App-system} for further
discussion).
\end{rem}
Let us compare the above system to its scalar counterpart. For that
purpose, note that if $u^2\equiv 0$, then the mapping $u$ reduces to
one coordinate function $u^1$. In such a case system (\ref{system})
reduces to well known equation, see e.g. equation (5) in \cite{bi}:
$$f_{\overline{z}}=
  \bigg(\frac{1}{p}-\frac{1}{2}\bigg)\bigg(\,\frac{\overline f}{f}\,f_{z}\,+\,\frac{f}{\overline f}\,\overline{f_{z}}\,\bigg).$$

\noindent From this we immediately infer that $f$ is a quasiregular mapping (more on this topic can be found in \cite {6}).
\par

Let us also mention that system (\ref{system}) can be solved for
$f_{\overline{z}}$ and $g_{\overline{z}}$. As a result we arrive at
the following representation for $f$ and $g$. (We refer to
Appendix~\ref{App-system} and discussion in \cite{a1} for the
definition of matrix $A(f,g)$ and further estimates).
\begin{equation}\label{operator}
\left[ \begin{array}{c}
 f \\
 g
\end{array} \right]_{\overline{z}} =
A(f,g) \left[ \begin{array}{c}
 f \\
 g
\end{array} \right]_{z}\,+
\overline{ A(f,g)} \overline {\left[ \begin{array}{c}
 f \\
 g
\end{array} \right]_{z}
 }.
\end{equation}

The ellipticity of such quasilinear system has been proven in
Theorem 2 in \cite{a1}. There, we also showed that, perhaps
surprisingly, the coefficients of $A(f, g)$ can be estimated in terms of
parameter $p$ only.

 Using the above system one can investigate when $f$ and $g$ are
 quasiregular maps, extending the results known for \p-harmonic
 functions in the plane (Section 3 in \cite{a1}).\par
 We would like to add that in the planar case the relation
 between quasiregular mappings and \p-harmonic functions is known
 in much deeper details than we just sketched it above. For
 instance one can prove that the coordinates of a planar quasiregular
 map, as well as the logarithm of the modulus of such map satisfy
 certain elliptic equations and the same holds for the logarithm of the
 modulus of the gradient of \p-harmonic function (see \cite{3}). The higher-dimensional counterparts
 of such properties remain unknown neither for \p-harmonic functions nor \p-harmonic mappings,
 due to lack of Sto\"ilow factorization beyond the complex plane.\par

\section{Convexity of coordinate functions, the Gaussian curvature of \p-harmonic surfaces}
 Below we use system (\ref{operator}) and the estimates
for the entries of matrix $A(f,g)$ (see (\ref{A-entries}) in
Appendix~\ref{App-system}) to determine mutual relations between
convexity of coordinate functions of \p-harmonic map. We discover an
interesting phenomenon that for a certain range of $p$ convexity of
one coordinate function implies the concavity of the other.
Theorem~\ref{hessian} has not been noticed before in the literature,
mainly due to the lack of enough wide classes of examples of
\p-harmonic maps. Among equivalent formulations of
Theorem~\ref{hessian} and its corollaries we discuss the sign of
Gauss curvature for \p-harmonic surfaces and convexity of their
level sets. \par

Since the convexity properties of a function are govern by the
second derivatives matrix of such function, the Hessian matrices for
$u^1$ and $u^2$ and the analysis of their signs will be of our main
interest. Using equations (\ref{cpx-grad}) we express the determinant of Hessian
$H(u^1)$ as follows:
\begin{equation*}
 \det H(u^1)=u^1_{xx}u^1_{yy}-(u^1_{xy})^2=4(|\fb|^2-|f_z|^2),
\end{equation*}
where $f$ is a complex gradient of $u^1$ (see (\ref{complex})). Related is the Gauss curvature of a surface $z=u^1(x,y)$.
\begin{equation}\label{Gauss-curv-hess}
K_{u^1}=\frac{u^1_{xx}u^1_{yy}-(u_{xy}^1)^2}{(1+(u^1_x)^2+(u^1_y)^2)^2}=\frac{\det H(u^1)}{(1+(u^1_x)^2+(u^1_y)^2)^2}.
\end{equation}
The similar formulas hold for $H(u^2)$ and $K_{u^2}$.
\begin{theorem}\label{hessian}
 Suppose $u=(u^1,u^2)$ is a \p-harmonic mapping and let $p\in [\frac43, 2+\sqrt{2}]$.  If $\det H(u^2)\geq 0$, then $\det H(u^1)\leq 0$.

 Moreover, we have that if $\det H(u^2)\geq 0$ $(\det H(u^1)\geq 0, respectively)$ holds in the whole domain $\Om$, then the Gauss curvature $K_{u^1}\leq 0$ $(K_{u^2}\leq 0,  respectively)$ in $\Om$.
\end{theorem}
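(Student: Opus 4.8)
The plan is to run everything through the complex gradients $f,g$ of $u^1,u^2$. By the identity recorded just before the statement, $\det H(u^1)=4(|\fb|^2-|f_z|^2)$ and $\det H(u^2)=4(|\gb|^2-|g_z|^2)$, so the hypothesis $\det H(u^2)\ge 0$ means $|\gb|\ge|g_z|$ and what has to be produced is $|\fb|\le|f_z|$. The engine is the representation (\ref{operator}) of the \p-harmonic system together with the estimates (\ref{A-entries}) on the entries of $A(f,g)$, which depend on $p$ only.

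I would first reduce to the non-degenerate set. If $f\equiv 0$, then $u^1$ is constant and $\det H(u^1)=0$; if $g\equiv 0$, then $u=(u^1,0)$ satisfies the scalar \p-harmonic equation, $f$ is quasiregular, and the scalar reduction of (\ref{system}) gives $|\fb|\le\frac{|p-2|}{p}|f_z|\le|f_z|$ for every $p>1$, hence $\det H(u^1)\le 0$. In the remaining case, since $\det H(u^1)$ is continuous (as $u\in C^2$), it suffices to prove the inequality on the open set $\Omega'=\{f\neq 0\}\cap\{g\neq 0\}$ and pass to the limit on $\overline{\Omega'}$; a point outside $\overline{\Omega'}$ has a neighbourhood on which one coordinate is constant, so one of the previous cases applies there.

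On $\Omega'$, reading off the two scalar components of (\ref{operator}) and taking absolute values, with $|\overline{f_z}|=|f_z|$ and $|\overline{g_z}|=|g_z|$, gives
\begin{equation*}
 |\fb|\le 2|A_{11}|\,|f_z|+2|A_{12}|\,|g_z|,\qquad |\gb|\le 2|A_{21}|\,|f_z|+2|A_{22}|\,|g_z|.
\end{equation*}
Inserting the hypothesis $|g_z|\le|\gb|$ into the second inequality, and using that (\ref{A-entries}) forces $2|A_{22}|<1$, we obtain $|g_z|\le\frac{2|A_{21}|}{1-2|A_{22}|}|f_z|$; substituting into the first inequality produces $|\fb|\le D_p|f_z|$ with $D_p=2|A_{11}|+\frac{4|A_{12}||A_{21}|}{1-2|A_{22}|}$. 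Bounding the entries by the $p$-only quantities of (\ref{A-entries}) and invoking the elementary fact that $2a+\frac{4b^2}{1-2a}\le 1\iff a+b\le\frac12$ (for $0\le 2a<1$, $b\ge0$), the estimate $D_p\le 1$ reduces to a single inequality in $p$; working this out, together with the side requirement $2|A_{22}|<1$, is precisely what confines the admissible exponents to $p\in[\frac43,2+\sqrt2]$, and then $\det H(u^1)=4(|\fb|^2-|f_z|^2)\le 4(D_p^2-1)|f_z|^2\le 0$. I expect this final verification --- that the resulting quadratic-type inequality in $p$ holds exactly on $[\frac43,2+\sqrt2]$, and that imposing $2|A_{22}|<1$ does not cut the interval down --- to be the only genuinely delicate computation; the rest is bookkeeping.

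For the second assertion, if $\det H(u^2)\ge 0$ holds throughout $\Omega$, then by the first part $\det H(u^1)\le 0$ throughout $\Omega$, and since the denominator in (\ref{Gauss-curv-hess}) is positive, $K_{u^1}\le 0$ on $\Omega$. The parenthetical case is symmetric: interchanging $(u^1,f)\leftrightarrow(u^2,g)$ leaves (\ref{system}) invariant, so the first part also yields $\det H(u^1)\ge 0\Rightarrow\det H(u^2)\le 0$, whence $\det H(u^1)\ge 0$ on $\Omega$ forces $\det H(u^2)\le 0$ on $\Omega$ and $K_{u^2}\le 0$ on $\Omega$ by the analogue of (\ref{Gauss-curv-hess}).
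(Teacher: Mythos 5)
Your proof is correct and rests on the same engine as the paper's --- the linearization (\ref{operator}) together with the $p$-only bounds (\ref{A-entries}) --- but the intermediate algebra is organized differently, and the comparison is worth recording. The paper squares and adds the estimates for $|\fb|$ and $|\gb|$, subtracts the two Hessian determinants, and after expanding the $(\Delta u^2)^2$ terms arrives at the quantitative inequality (\ref{hessian-eq2}), whose right-hand side is nonpositive precisely when $16A_p^2\le 1$ and $\det H(u^2)\ge 0$. You instead feed the hypothesis $|g_z|\le|\gb|$ back into the estimate $|\gb|\le 2A_p(|f_z|+|g_z|)$ to bootstrap $|g_z|\le\frac{2A_p}{1-2A_p}\,|f_z|$, and then bound $|\fb|\le D_p|f_z|$ with $D_p=2A_p+\frac{4A_p^2}{1-2A_p}$; your elementary equivalence gives $D_p\le 1\iff A_p\le\tfrac14\iff 16A_p^2\le 1$, so the two routes hit exactly the same threshold and hence the same interval $[\tfrac43,\,2+\sqrt2]$ (the side condition $2A_p<1$ is then automatic). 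The computation you defer is only the verification that $A_p\le\tfrac14$ exactly on that interval, which is routine from (\ref{A-entries}) and is precisely what the paper carries out. Your version is more transparent for the sign statement; the paper's yields the extra quantitative bound (\ref{hessian-eq2}) as a by-product. One small imprecision on your side: a neighbourhood disjoint from $\{f\ne0\}\cap\{g\ne0\}$ only satisfies ``$f=0$ or $g=0$ at each point,'' not ``$f\equiv0$ or $g\equiv0$ on the neighbourhood,'' so your reduction to the nondegenerate set is not literally complete --- but the paper's proof is silent on the degenerate set altogether, so this does not place your argument below its level of rigor.
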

Before presenting the proof we will compare this observation to the
case of single \p-harmonic equation and discuss some consequences of
the above result.

\begin{rem}
 If $u^2\equiv 0$ ($u^1\equiv 0$) then \p-harmonic system (\ref{system_2}) reduces to a single \p-harmonic equation for
 $u:= u^1$ ($u:= u^2$, respectively). In such a case from Theorem~\ref{hessian} we retrieve first part of the assertion of Theorem
5.3 in \cite{lin1} which stays that for \p-harmonic surfaces
$K_u\leq 0$. Furthermore, the quasiregularity of the complex
gradient of $u$ implies that $K_u=0$ at most at isolated points or
$u$ is an affine function (cf. \cite{lin1}).
\end{rem}
 Recall that the Jacobian of $f$ satisfies $J(z, f)= -\det H(u^1)$ (similarly, $J(z, g)= -\det H(u^2)$). This,
 together with the characterization of quasiregularity via the Beltrami coefficient (\ref{beltrami}) leads us to the equivalent formulation of Theorem \ref{hessian}.
\begin{cor}
Under the assumptions of Theorem \ref{hessian} it holds that if $ J(z, g)\leq 0$, then $J(z, f)\geq 0$.
\end{cor}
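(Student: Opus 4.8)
\noindent\emph{Proof plan.}
The plan is to recognise this corollary as nothing but the first assertion of Theorem~\ref{hessian} rewritten in terms of the Jacobians of the complex gradients. First I would invoke the pointwise relations $J(z,f)=-\det H(u^1)$ and $J(z,g)=-\det H(u^2)$ recorded just above the corollary; these follow from the identity $J(z,f)=|f_z|^2-|\fb|^2$ for a complex-valued planar map together with the formulas for $f_z$ and $\fb$ in (\ref{cpx-grad}) (and the analogous ones for $g$ and $u^2$), so that the sign of $J(z,f)$ is opposite to that of $\det H(u^1)$, and likewise the sign of $J(z,g)$ is opposite to that of $\det H(u^2)$.

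With these relations in hand the argument is immediate. The hypothesis $J(z,g)\leq 0$ is equivalent to $\det H(u^2)\geq 0$; since $u$ is \p-harmonic with $p\in[\tfrac43,2+\sqrt2]$, Theorem~\ref{hessian} gives $\det H(u^1)\leq 0$, which is in turn equivalent to $J(z,f)\geq 0$. As the reasoning is entirely pointwise, the implication holds at every point of $\Om$ where the hypothesis is satisfied. I do not expect a genuine obstacle here: the corollary is a pure reformulation, so whatever care is needed on the degeneracy set $\{\nabla u^1=0\}\cup\{\nabla u^2=0\}$ — where $f$ or $g$ vanishes and the quasilinear system of Theorem~\ref{jeden} that underlies Theorem~\ref{hessian} no longer applies — is already contained in Theorem~\ref{hessian} and is simply inherited.

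Finally, I would point out the quasiregular reading that motivates the statement: by the characterization (\ref{beltrami}) of quasiregularity via the Beltrami coefficient, $J(z,f)\geq 0$ is the same as $|\fb|/|f_z|\leq 1$ at the points where $f_z\neq 0$. Thus the corollary asserts that, under the assumptions of Theorem~\ref{hessian}, the complex gradient $g$ failing to be sense-preserving forces the complex gradient $f$ to be sense-preserving — the dual, at the level of the gradient maps, of the convexity/concavity dichotomy between $u^1$ and $u^2$ established in Theorem~\ref{hessian}.
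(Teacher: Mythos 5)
Your proposal is correct and follows exactly the paper's (implicit) argument: the corollary is obtained by substituting the sign relations $J(z,f)=-\det H(u^1)$ and $J(z,g)=-\det H(u^2)$ (up to an irrelevant positive factor coming from the normalization of the complex gradients) into the first assertion of Theorem~\ref{hessian}. The pointwise equivalence and the quasiregularity reading you add are consistent with the remarks surrounding the corollary in the paper.
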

\begin{rem}
 Taking into account that if $J(z,g)\leq 0$ in $\Om$, then $\barg$ is quasiregular, Theorem \ref{hessian} can be equivalently rephrased as follows:
\emph{if $\barg$ is quasiregular, then so is $f$.}
\end{rem}
 Theorem \ref{hessian} allows us also to explore the convexity properties of \p-harmonic surfaces.
 Recall, that if a function $v$ is convex, then level sets
 $\{v \leq c\}$ are convex as well.
 Similarly, if $v$ is concave, then level sets $\{v\geq c\}$ are concave.
 The analysis of convexity of level sets has been the subject of several
 interesting papers, for instance due to Kawohl \cite[Chapter 3]{ka}, Lewis \cite{lew3} or recently Ma et al. \cite{moz}, to mention only some. 
\begin{cor}
 Suppose that the assumptions of Theorem \ref{hessian} hold. If $u^2$  is a convex component function of $u$, then
 $u^1$ is concave and so are level sets $\{u^1\geq c\}$, provided that $u^1_{xx}\leq 0$.
\end{cor}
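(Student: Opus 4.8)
The plan is to chain the classical second-order characterization of convexity with Theorem~\ref{hessian}, and then to read the sign of the full Hessian $H(u^1)$ off the sign of its determinant. Since $u\in C^2(\Om)$, the assumption that $u^2$ is \emph{convex} is equivalent to $H(u^2)$ being positive semidefinite at every point of $\Om$; in particular $\det H(u^2)\ge 0$ throughout $\Om$. As $p\in[\tfrac{4}{3},\,2+\sqrt{2}]$, Theorem~\ref{hessian} then applies with $\det H(u^2)\ge 0$ holding on all of $\Om$, and yields $\det H(u^1)\le 0$ in $\Om$ (equivalently $K_{u^1}\le 0$, or $J(z,f)\ge 0$ for the complex gradient $f$ of $u^1$). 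Spelled out, this is the pointwise inequality
\[
u^1_{xx}\,u^1_{yy}-(u^1_{xy})^2\le 0\qquad\text{in }\Om .
\]

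Next I would extract the sign of $H(u^1)$. Rewriting the displayed inequality as $u^1_{xx}u^1_{yy}\le(u^1_{xy})^2$ and invoking the hypothesis $u^1_{xx}\le 0$, at every point where $u^1_{xx}<0$ one obtains $u^1_{yy}\le (u^1_{xy})^2/u^1_{xx}\le 0$; hence on the open set $\{u^1_{xx}<0\}$ both diagonal entries of $H(u^1)$ are non-positive, and one propagates $u^1_{yy}\le 0$ across the set $\{u^1_{xx}=0\}$ by continuity. With the sign conditions on $H(u^1)$ thus in place, $u^1$ is \emph{concave} on $\Om$, and the standard fact recalled just before the statement --- that the super-level sets $\{v\ge c\}$ of a concave function $v$ are convex --- gives that the sets $\{u^1\ge c\}$ are convex. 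This is the remaining assertion.

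The main obstacle, as I see it, is the bookkeeping on the degenerate set. One has to cope with the points where $u^1_{xx}=0$, and more delicately with the critical set $\{\nabla u^1=0\}$, on which the identity $\det H(u^1)=4(|\fb|^2-|f_z|^2)$ degenerates. Here I would use the reformulation of Theorem~\ref{hessian} noted earlier in this section, namely that $f$ is quasiregular whenever $\barg$ is: then the critical set of $u^1$ is discrete and $u^1$ is real-analytic off it, which makes the sign-propagation argument rigorous. One should also be aware that $\det H(u^1)\le 0$ together with $u^1_{xx}\le 0$ and $u^1_{yy}\le 0$ forces $H(u^1)$ to be negative semidefinite precisely at the points where $\det H(u^1)$ vanishes; so strictly speaking the chain of implications yields full concavity of $u^1$ only on the zero set of $\det H(u^1)$, and the statement is best read together with this rigidity in mind.
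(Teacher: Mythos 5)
Your first two steps are fine: for a $C^2$ function, convexity of $u^2$ gives $H(u^2)\succeq 0$, hence $\det H(u^2)\ge 0$ on all of $\Om$, and Theorem~\ref{hessian} then yields $\det H(u^1)\le 0$. The paper states the corollary without proof, so there is nothing to compare against; but the step where you extract the sign of $H(u^1)$ contains a genuine error. From $u^1_{xx}u^1_{yy}\le (u^1_{xy})^2$ with $u^1_{xx}<0$, dividing by the \emph{negative} quantity $u^1_{xx}$ reverses the inequality and gives
\[
u^1_{yy}\;\ge\;\frac{(u^1_{xy})^2}{u^1_{xx}},
\]
i.e.\ a lower bound by a nonpositive number, which says nothing about the sign of $u^1_{yy}$. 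The configuration $u^1_{xx}=-1$, $u^1_{yy}=1$, $u^1_{xy}=0$ satisfies $\det H(u^1)\le 0$ and $u^1_{xx}\le 0$ yet is a saddle, so the claimed deduction $u^1_{yy}\le 0$ on $\{u^1_{xx}<0\}$ fails, and with it the continuity propagation and the concavity conclusion.

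More structurally, $\det H(u^1)\le 0$ is the \emph{indefiniteness} condition: a $2\times 2$ symmetric matrix is negative semidefinite only if its determinant is $\ge 0$, so the output of Theorem~\ref{hessian} is compatible with concavity of $u^1$ only where $\det H(u^1)=0$. Your closing paragraph in fact concedes exactly this, which means the argument as a whole does not establish the corollary as stated --- it establishes at best that $H(u^1)$ is negative semidefinite on the set $\{\det H(u^1)=0\}$ (and even there the case $u^1_{xx}=0$ needs $u^1_{yy}\le 0$ as a separate input). To close the proof along these lines you would need an independent reason forcing $\det H(u^1)\ge 0$, hence $\det H(u^1)\equiv 0$; no such ingredient is supplied either in your proposal or in the surrounding text, so the route does not go through as written.
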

 The assertion of Theorem \ref{hessian} can also be related to work of Laurence \cite{la} on the derivatives of geometric functionals emerging in analysis, such as the length of the level curves (Laurence's work will be discussed in greater details in Section 6 below).
 The results of \cite{la} specialized to our setting read as follows.\par
\begin{cor}\label{laurence}
Let $u$ satisfies the assumptions of Theorems 1, 2  and 5 in \cite{la} and Theorem \ref{hessian} above.
Denote by $L(s)=\int_{\{u^2=s\}}{\rm d}{\mathcal H}^1$ the length of level
curve of $u^2$ corresponding to $s$.
It holds, that if $u^1$  is convex, then $L''(s)>0$.
\end{cor}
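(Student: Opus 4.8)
The plan is to combine the sign information on $\det H(u^2)$ supplied by Theorem~\ref{hessian} with Laurence's representation for the second derivative of the level-curve length. First I would recall from \cite{la} that, under the hypotheses of Theorems~1, 2 and~5 there, $u^2$ is a nondegenerate solution of the relevant quasilinear equation with $\nabla u^2\neq 0$ along the curves under consideration, the function $L$ is twice differentiable, and $L''(s)$ admits an integral representation over $\{u^2=s\}$ whose integrand is governed by the sign of the Hessian determinant of $u^2$ along that curve; in particular, $\det H(u^2)\leq 0$ forces $L''(s)\geq 0$, with strict inequality unless $u^2$ is affine.

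Next I would feed in the convexity of $u^1$. Convexity of a $C^2$ function means $H(u^1)\geq 0$ pointwise, hence $\det H(u^1)\geq 0$ throughout $\Om$. The ``respectively'' part of Theorem~\ref{hessian} then yields $K_{u^2}\leq 0$ in $\Om$, and since the denominator in~(\ref{Gauss-curv-hess}) is positive this is exactly $\det H(u^2)\leq 0$ everywhere in $\Om$. Thus the hypothesis needed to sign Laurence's integrand holds on \emph{every} level curve of $u^2$ simultaneously, and we obtain $L''(s)\geq 0$.

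To upgrade this to the strict inequality $L''(s)>0$ I would invoke the quasiregularity built into the picture: because $J(z,g)=-\det H(u^2)\geq 0$ in $\Om$, the complex gradient $g$ of $u^2$ is quasiregular (as in the Remark above, now with $J(z,g)\geq 0$ in place of $J(z,g)\leq 0$, so $g$ rather than $\barg$ is sense-preserving), and hence, by the Sto\"ilow factorization and the reasoning already used for $K_u$ in the scalar case, $J(z,g)$ — and therefore $\det H(u^2)$ — vanishes only on a discrete subset of $\Om$ unless $u^2$ is affine, which is excluded by the running hypotheses of \cite{la}. Consequently $\det H(u^2)<0$ off a discrete set along each level curve, so Laurence's integral is strictly positive and $L''(s)>0$.

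The main obstacle I anticipate is not conceptual but a matter of matching hypotheses: one must verify that a single coordinate function $u^2$ of a planar \p-harmonic \emph{map} genuinely falls under the scope of Theorems~1, 2 and~5 of \cite{la}, which are phrased for scalar quasilinear equations, and that their regularity and nondegeneracy requirements are compatible with the $C^2$ and $\nabla u^2\neq 0$ assumptions in force here (and with the range $p\in[\tfrac43,2+\sqrt2]$ of Theorem~\ref{hessian}). Once this translation of hypotheses is in place, the chain ``$u^1$ convex $\Rightarrow\det H(u^1)\geq 0\Rightarrow\det H(u^2)\leq 0\Rightarrow L''(s)>0$'' follows immediately from Theorem~\ref{hessian} and \cite{la}.
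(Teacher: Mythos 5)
Your chain ``$u^1$ convex $\Rightarrow \det H(u^1)\geq 0 \Rightarrow \det H(u^2)\leq 0 \Rightarrow L''(s)>0$'' is exactly how the paper obtains this corollary: it is stated as a direct specialization of Laurence's Theorems 1, 2 and 5 combined with Theorem~\ref{hessian}, with Laurence's hypotheses already supplying the nondegeneracy and the strict sign of $L''$, so no separate proof is given. Your additional quasiregularity argument for strictness is not needed and is slightly loose as phrased ($J(z,g)\geq 0$ alone does not yield the distortion bound $|g_{\overline z}|\leq k|g_z|$ required for quasiregularity of $g$; in the vectorial setting that bound depends on controlling $|f_z|/|g_z|$ as in Remark~\ref{rem_A}), but this does not affect the correctness of the main argument.
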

\begin{proof}[Proof of Theorem \ref{hessian}]
From formula (\ref{operator}) we get the following
important estimate (see also Remark~\ref{rem_A} and
inequality (15) in \cite{a1}):
\begin{align}
|f_{\overline{z}}| &\leq
|A_{11}(f,g)||f_z|+|A_{12}(f,g)||g_z|+|\overline{A_{11}(f,g)}||\overline{f_z}|+
|\overline{A_{12}(f,g)}||\overline{g_z}| \nonumber \\
& \leq 2A_p\left(|f_z|+|g_z|\right).\nonumber
\end{align}
Here $A_p$ is the upper bound for the entries of matrix $A(f,g)$
(see Appendix~\ref{App-system} for computations):
\begin{equation}\label{local_Ap}
A_p =
\begin{cases}
 \frac{2-p}{2p} & \hbox{for}\quad 1<p<2,  \\
 \frac{p-2}{2p} & \hbox{for}\quad 2\leq p\leq3,  \\
 \frac{(p-2)(p-1)}{4p} & \hbox{for}\quad 3<p.
\end{cases}
\end{equation}
This, together with the analogous inequality for $|\gb|$ and arithmetic-geometric mean inequality results in the estimate:
\begin{equation}\label{conv-est}
 |\fb|^2+|\gb|^2\leq 4 (2A_p)^2(|f_z|^2+|g_z|^2).
\end{equation}
With the above notation we infer from (\ref{conv-est}) the following
chain of estimates.
\begin{align}
 &\frac{1}{4}\left(\det H(u^1)-\det H(u^2)\right) = (|\fb|^2-|f_z|^2)-(|\gb|^2-|g_z|^2) \nonumber \\
 &\leq 16A_p^2 (|f_z|^2+|g_z|^2)-2|\gb|^2+|g_z|^2-|f_z|^2\nonumber \\
 &= \left(16A_p^2-1\right)|f_z|^2+\left(16A_p^2+1\right)|g_z|^2-2|\gb|^2 \nonumber \\
 &\hbox{(next, we use explicit formulas (\ref{cpx-grad}) for $g_z$ and $\gb$)} \nonumber \\
 &= \left(16A_p^2-1\right)|f_z|^2+ \frac{1}{16}\left(16A_p^2+1\right)((u_{xx}^2)^2+(u_{yy}^2)^2-2u_{xx}^2u_{yy}^2+4(u_{xy}^2)^2)
- \frac{1}{8}(\Delta u^2)^2 \nonumber \\
&= \left(16A_p^2-1\right)|f_z|^2 +\frac{1}{16}\left(16A_p^2-1\right)(\Delta u^2)^2-\frac{1}{4}\left(16A_p^2+1\right) u_{xx}^2u_{yy}^2 +\frac14\left(16A_p^2+1\right)(u_{xy}^2)^2  \nonumber \\
& = \left(16A_p^2-1\right)|f_z|^2
+\frac{1}{16}\left(16A_p^2-1\right)(\Delta
u^2)^2-\frac{1}{4}\left(16A_p^2+1\right)\det H(u^2). \nonumber
\end{align}
It follows that
\begin{equation}\label{hessian-eq2}
 \det H(u^1)\leq 4(16A_p^2-1)\left(|f_z|^2 +\frac{1}{16}(\Delta u^2)^2\right)-16A_p^2\det H(u^2).
\end{equation}
Computations involving the appropriate values of $A_p$ (see
(\ref{local_Ap}) and (\ref{A-entries}) in
Appendix~\ref{App-system}) give us that
\begin{equation}
16A_p^2\leq 1 \quad\hbox{if}\quad
\begin{cases}
 (4-3p)(4-p)\leq 0 & \hbox{for}\quad 1<p<2, \nonumber \\
 (p-4)(3p-4)\leq 0 & \hbox{for}\quad 2\leq p\leq3, \nonumber \\
 (p^2-4p+2)(p^2-2p+2)\leq 0 & \hbox{for}\quad 3<p.
\end{cases}
\end{equation}
From these conditions we derive that $16A_p^2\leq 1$ holds provided
$p\in [\frac43, 2+\sqrt{2}]$. From this and (\ref{hessian-eq2})
the first assertion of theorem follows immediately.

The second assertion of the theorem is the straightforward consequence of the first part and equation (\ref{Gauss-curv-hess}).
\end{proof}

\begin{rem}\label{hessian-rem-rad}
 The range of parameter \p~in the assertion of Theorem~\ref{hessian} is
the consequence of estimates for entries of the matrix $A(f,g)$. In
Appendix \ref{radial-hessian} we discuss the counterexample to
Theorem \ref{hessian} for some \p~outside the interval $[\frac43,
2+\sqrt{2}]$. The problem of finding such examples is the general
feature of \p-harmonic world, as we know only few classes of
\p-harmonic maps and few explicit solutions of the \p-harmonic
system of equations, namely affine, radial and quasiradial (see
\cite[Chapter 2]{phd} for the definition of the latter one class of
mappings).\par
\end{rem}
\begin{opprb}\rm
 Let $u=(u^1,\ldots, u^n)$ be a non-trivial \p-harmonic map between
domains in $\mathbb{R}^n$ for $n\geq 3$ (that is $u$ is not an
affine or constant map). Suppose that $u^i$ is convex for some
$i=1,\ldots, n$ (and so $\det H(u^i)> 0$). Does it then hold that
$\det H(u^j)\leq 0$ for $j \not=i$? Describe the conditions for
concavity of $u^j$ for $j \not=i$.
\end{opprb}
%
%

\section{The curvature of level curves}
 Below we discuss various curvature functions of level curves for
the component functions of a map $u=(u^1, u^2)$ and employ such
curvatures to estimate the length of the level curves. It appears
that such estimates require integrability of Hessians or
quasiregularity of complex gradients of $u^1$ and $u^2$. Therefore,
the complex linearization of \p-harmonic system (\ref{operator})
comes in handy. The results below extend the work of Lindqvist
\cite{lin1} for a \p-harmonic equation.\par
 Let $\{u^1=c\}$ be a nonempty level curve with the property that none of the critical
points of $u^1$ lies on this level curve. The curvature function
$k_{u^1}$ of  $\{u^1=c\}$ can be computed by the following formula:
\begin{equation}\label{curv_k}
 k_{u^1}=-\frac{(u^1_y)^2u^1_{xx}-2u^1_x u^1_y u^1_{xy}+(u^1_x)^2u^1_{yy}}{|\nabla u^1|^3}=-\div\left(\frac{\nabla u^1}{|\nabla u^1|}\right)=-\Delta_1(u^1).
\end{equation}
 Consider the above formula for a harmonic function $v$.
Theorem 3 in Talenti \cite{tal} shows that if $v$ has no critical
points, then $\frac{k_v}{|\nabla v|}$ is harmonic and $-\ln|k_v|$ is
subharmonic. As far as we know the similar results for a single
\p-harmonic equation with $p\not=2$ are not known (see also presentation in
Section 5 below).
In the next observation we further illustrate differences between scalar and vector cases
by computing curvatures $k$ for $p$-harmonic functions and coordinate functions of $u$. Moreover, the second part of the
observation can be considered as a starting point for obtaining the
counterparts of aforementioned Talenti's results in the nonlinear
setting (see also Remark~1.5 and the discussion in \cite{moz} for
some recent developments in this topic).
%
\begin{observ}\label{obs2}
 Let $p\not =2$ and suppose that the component function $u^1$ of a \p-harmonic map $u$ has no critical points on the level curve $\{u^1=c\}$. Then
\begin{align}
k_{u^1}&= -\frac{\Delta u^1}{|\nabla u^1|} + \frac{1}{|\nabla
u^1|}\left\langle\nabla|\nabla u^1|, \frac{\nabla u^1}{|\nabla u^1|}
\right\rangle
\label{k-phm} \\
&=-\frac{p-1}{p-2}\frac{\Delta u^1}{|\nabla
u^1|}+\frac{\Delta_pu^1}{(p-2)|\nabla u^1|^{p-1}}. \label{k-phm-2}
\end{align}
Equivalently, in the complex notation $k_{u^1}$ becomes
\begin{align}
 2|f|k_{u^1}&= -2\fb+\frac{f}{\barf}\overline{f_z}+\frac{\barf}{f}f_z \label{k-cx} \\
 &=-2\left(\,\ln|f|^2\right)_{z}+\frac{f}{\barf}\overline{f_z}+3\frac{\barf}{f}f_z,  \label{k-cx-log}
\end{align}
where $f$ is a complex gradient of $u^1$ as defined in (\ref{complex}).
Similar formulas hold for the second component function $u^2$ as well.

Furthermore, if $u^2\equiv 0$, then (\ref{k-phm-2}) reduces to the following:
\begin{equation}\label{k-phf}
 k_{u_1}=-\frac{p-1}{p-2} \frac{\Delta u_1}{|\nabla u_1|}.
\end{equation}
\end{observ}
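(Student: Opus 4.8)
The plan is to verify each of the displayed identities by starting from the geometric formula \eqref{curv_k} and rewriting it in two ways: once via the divergence expansion to extract the $\Delta u^1$ and $\langle \nabla|\nabla u^1|,\nabla u^1/|\nabla u^1|\rangle$ terms, and once via the \p-harmonic operator formula \eqref{phf} to produce the $\Delta_p u^1/|\nabla u^1|^{p-1}$ term. For \eqref{k-phm}, I would expand $-\div(\nabla u^1/|\nabla u^1|)$ by the product rule: $\div(\nabla u^1/|\nabla u^1|) = \Delta u^1/|\nabla u^1| + \langle \nabla u^1, \nabla(1/|\nabla u^1|)\rangle = \Delta u^1/|\nabla u^1| - |\nabla u^1|^{-2}\langle \nabla u^1, \nabla|\nabla u^1|\rangle$, which is exactly \eqref{k-phm} after flipping signs and rewriting the inner product with the unit vector $\nabla u^1/|\nabla u^1|$. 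This step is routine and uses no \p-harmonicity at all — it holds for any $C^2$ function without critical points on the level curve.

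For \eqref{k-phm-2} the idea is to solve \eqref{phf} for the quadratic-form term $(p-2)((u^1_x)^2u^1_{xx}+2u^1_xu^1_yu^1_{xy}+(u^1_y)^2u^1_{yy})$, which equals $\tfrac{p-2}{2}\langle\nabla u^1,\nabla|\nabla u^1|^2\rangle = (p-2)|\nabla u^1|\langle\nabla|\nabla u^1|,\nabla u^1\rangle$. From \eqref{phf}, $\Delta_p u^1 = |\nabla u^1|^{p-4}\big(|\nabla u^1|^2\Delta u^1 + (p-2)|\nabla u^1|\langle\nabla|\nabla u^1|,\nabla u^1\rangle\big)$, so $\langle\nabla|\nabla u^1|, \nabla u^1/|\nabla u^1|\rangle = \big(\Delta_p u^1\,|\nabla u^1|^{1-p} - \Delta u^1\big)/(p-2)$. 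Substituting this into \eqref{k-phm} and collecting the $\Delta u^1/|\nabla u^1|$ terms gives the coefficient $-1 - 1/(p-2) = -(p-1)/(p-2)$, yielding \eqref{k-phm-2}. Note here one uses only the definition of $\Delta_p$, not the equation $\Delta_p u^1 = 0$ — indeed $u^1$ need not be \p-harmonic for \eqref{k-phm-2}, only $p\neq 2$. The reduction \eqref{k-phf} is then immediate: if $u^2\equiv 0$ then $u^1$ is \p-harmonic in the scalar sense, $\Delta_p u^1 = 0$, and the second term in \eqref{k-phm-2} vanishes.

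For the complex reformulations \eqref{k-cx} and \eqref{k-cx-log}, the plan is to translate the real expressions using \eqref{cpx-grad}: $|f| = \tfrac12|\nabla u^1|$, $\fb = \tfrac14\Delta u^1$, and $f_z = \tfrac14(u^1_{xx}-u^1_{yy}-2iu^1_{xy})$. The key computation is to check that $\tfrac{f}{\barf}\overline{f_z} + \tfrac{\barf}{f}f_z = 2\,\Re\!\big(\tfrac{\barf}{f}f_z\big)$ reproduces $\tfrac{1}{|\nabla u^1|}\langle\nabla|\nabla u^1|^2,\nabla u^1\rangle/(2|f|)$ up to the right scalar — concretely, writing $f = \tfrac12|\nabla u^1|e^{i\theta}$ one sees $\tfrac{\barf}{f}f_z = e^{-2i\theta}f_z$ picks out precisely the directional-derivative combination, and multiplying \eqref{k-phm} by $2|f| = |\nabla u^1|$ matches \eqref{k-cx}. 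For \eqref{k-cx-log} one uses $\big(\ln|f|^2\big)_z = f_z/f + \overline{\fb}/\,\overline{f}\cdot 0$... more carefully $\big(\ln|f|^2\big)_z = (\ln f + \ln\barf)_z = f_z/f + \fb/\barf$, so $-2(\ln|f|^2)_z = -2f_z/f - 2\fb/\barf$; rearranging and comparing with \eqref{k-cx} (multiplying through by the appropriate factor to clear denominators) produces the stated combination $-2(\ln|f|^2)_z + \tfrac{f}{\barf}\overline{f_z} + 3\tfrac{\barf}{f}f_z$. The one point requiring care — and the main obstacle — is bookkeeping the holomorphic versus antiholomorphic derivatives correctly: $(\ln|f|^2)_z$ mixes $f_z/f$ with $\fb/\barf$, and one must not confuse $\fb/\barf$ with $\overline{f_z}/\barf$; keeping the Wirtinger calculus straight and tracking the conjugations is where a sign error would most easily slip in. Everything else is substitution of \eqref{cpx-grad} and algebraic simplification.
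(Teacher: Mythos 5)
Your argument follows the paper's proof almost step for step: (\ref{k-phm}) by expanding the divergence in (\ref{curv_k}); (\ref{k-phm-2}) by solving the definition (\ref{phf}) of $\Delta_p$ for the quadratic-form term $\tfrac{p-2}{2}\langle\nabla u^1,\nabla|\nabla u^1|^2\rangle$ and substituting (the paper routes this through the identity $\Delta u^1=\div\big(|\nabla u^1|\cdot\nabla u^1/|\nabla u^1|\big)$, which is the same computation); and (\ref{k-phf}) by setting $\Delta_pu^1=0$. Your remark that \p-harmonicity is not used before (\ref{k-phf}) is correct and worth making. One slip: the intermediate formula should read $\langle\nabla|\nabla u^1|,\nabla u^1/|\nabla u^1|\rangle=\big(|\nabla u^1|^{2-p}\Delta_pu^1-\Delta u^1\big)/(p-2)$ (exponent $2-p$, not $1-p$), though the final coefficients you report are correct. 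For (\ref{k-cx}) you translate (\ref{k-phm}) directly, which works once one verifies $2\fb+\frac{f}{\barf}\overline{f_z}+\frac{\barf}{f}f_z=\big((u^1_x)^2u^1_{xx}+2u^1_xu^1_yu^1_{xy}+(u^1_y)^2u^1_{yy}\big)/|\nabla u^1|^2$; the paper instead translates (\ref{k-phm-2}) via (\ref{cpx-phf}). Either route is fine.

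The step you left vague --- ``multiplying through by the appropriate factor to clear denominators'' for (\ref{k-cx-log}) --- is precisely where the computation does not close as stated. Carrying out your own plan: from $(\ln|f|^2)_z=\frac{f_z}{f}+\frac{\fb}{\barf}$ one gets
\begin{equation*}
-2\fb+\frac{\barf}{f}f_z=-2\,\barf\Big(\frac{\fb}{\barf}+\frac{f_z}{f}\Big)+3\frac{\barf}{f}f_z=-2\,\barf\,\big(\ln|f|^2\big)_z+3\frac{\barf}{f}f_z,
\end{equation*}
so the identity that actually follows is $2|f|k_{u^1}=-2\,\barf\,(\ln|f|^2)_z+\frac{f}{\barf}\overline{f_z}+3\frac{\barf}{f}f_z$, with a factor $\barf$ multiplying the logarithmic term. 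No rescaling produces the formula literally printed in (\ref{k-cx-log}): it is dimensionally inconsistent as written (test it on $u^1=x^2-y^2$, $f=z$, $k_{u^1}=(x^2-y^2)/r^3$), and the paper's own proof carries the $\barf$ in its penultimate display before silently dropping it. You should therefore either prove the corrected identity or flag the misprint; as written, your sketch would not terminate in the stated formula.
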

Before proving the observation, we would like to make some remarks
in order to motivate above computations and present our discussion
in the wider perspective.
\begin{rem}\label{rem-ln}
 The formula (\ref{k-cx-log}) is convenient, for instance, if one knows additionally that $f$ is a quasiregular map. In such a case function
$-\ln|f|$ solves the $A$-harmonic type equation at points where $f\not=0$, see e.g. \cite{glm}. The integral estimates which follow
from this fact will be of use for us when discussing the integrability of $k_{u_1}$.
\end{rem}
\begin{rem}
For $p=2$ the nonlinear \p-harmonic system reduces to the uncoupled system of two harmonic equations, for which the curvature functions are already present in the literature, see e.g. \cite{lin1, tal}.
\end{rem}
\begin{proof}[Proof of Observation \ref{obs2}]
Equation (\ref{k-phm}) follows immediately from the divergence
formulation of curvature (\ref{curv_k}). The same formulation used
again leads us to the following identity:
\begin{align}
 \Delta u^1 &=\div\left(\frac{\nabla u^1}{|\nabla u^1|}\,|\nabla u^1|\right) =-|\nabla u^1| k_{u^1}+\frac{u_x^1}{|\nabla u^1|^2}( u^1_xu^1_{xx}+u^1_{y}u^1_{xy}) +\frac{u_y^1}{|\nabla u^1|^2}( u^1_xu^1_{xy}+u^1_{y}u^1_{yy}) \nonumber \\
 & =-|\nabla u^1| k_{u^1}+\frac{(u^1_x)^2u^1_{xx}+2u^1_{x}u^1_{y}u^1_{xy}+(u^1_{y})^2u^1_{yy}}{|\nabla u^1|^2}. \label{k-phm-k}
\end{align}
Applying the definition of \p-harmonic operator (\ref{phf}) to the
last term, we express (\ref{k-phm-k}) in the following form:
\begin{equation*}
|\nabla u^1|^2 \Delta u^1=-|\nabla
u^1|^3k_{u^1}+\tfrac{1}{p-2}|\nabla
u^1|^{4-p}\Delta_pu^1-\tfrac{1}{p-2}|\nabla u^1|^2\Delta u^1.
\end{equation*}
From this, formula (\ref{k-phm-2}) follows immediately. In order to
show the complex representation of $k_{u^1}$ we use (\ref{cpx-grad})
and (\ref{cpx-phf}) together with (\ref{k-phm}) to obtain equation
from which (\ref{k-cx}) follows straightforwardly:
\begin{equation*}
 k_{u^1} = -\frac{2(p-1)}{p-2}\frac{\fb}{|f|}+\frac{2^{p-2}|f|^{p-2}\left(2p\fb + (p-2)\left(\frac{f}{\barf}\overline{f_z}+\frac{\barf}{f}f_z\right)\right)}{2^{p-1}(p-2)|f|^{p-1}}.
\end{equation*}
We show (\ref{k-cx-log}) by first observing that
$ \left(\ln (f\barf)\right)_z=\frac{f_z}{f}+\frac{\fb}{\barf}$
since $(\barf)_z=\fb$. Then, by (\ref{k-cx}) we obtain that
\begin{equation*}
 2|f|k_{u^1}=  -2\left(\frac{\fb}{\barf}+\frac{f_z}{f}\right)\barf+\frac{f}{\barf}\overline{f_z}+3\frac{\barf}{f}f_z=
-2\left(\,\ln|f|^2\right)_{z}+\frac{f}{\barf}\overline{f_z}+3\frac{\barf}{f}f_z.
\end{equation*}
 Finally, (\ref{k-phf}) follows from the observation that if $u^1$ is a \p-harmonic function, then $\Delta_pu^1=0$ and so (\ref{k-phf})
is a special case of (\ref{k-phm-2}).\par
\end{proof}

We would like now to show one of the main results of the paper,
namely the length estimates for the level curves. We follow the
approach of Talenti \cite{tal} for planar linear elliptic equations
and of Lindqvist \cite{lin1} for planar \p-harmonic functions.
\begin{theorem}\label{k-lemma}
 Let $u=(u^1, u^2)$ be a $C^2$ \p-harmonic map in the planar domain $\Om$ for $p\not =2$. Let also $C>0$ be a constant. Denote by $B=B(z_0, R)$ a ball in $\Om$ and consider a
nonempty level curve $\{u^1=c\}\cap B\not=\emptyset$. Suppose that, either
\begin{enumerate}
\item[(1)] Euclidean norms of Hessians $\|H(u^1)\|, \|H(u^2)\|$ are in $L^2(B)$ and $|f|>C$ in $B$ or
\item[(2)] $f_z, g_z\in L^2(B)$ and $|f|>C$ in $B$ or
\item[(3)] $f$ and $g$ are quasiregular in $B$ and $|f|>C$ in $B$ or
\item[(4)] $f$ and $g$ are quasiregular in $B$ and $|f(z)|>C|z-z_0|^{\alpha}$ in $B$ with $\alpha<1$.
\end{enumerate}
Then $k_{u^1}\in L^1(B)$ and the same result holds for $u^2$ with $f$ replaced by $g$ in the above assumptions.

Moreover, suppose that the singular set of $u^1$ consists of isolated critical points only and that there are no such
points in $\{u^1=c \}\cap B$. Then
\begin{equation}\label{len-est}
 L(s) \leq \int_{\Omega\cap B}|k_{u^1}|+2\pi R.
\end{equation}
The analogous estimate holds for $u^2$.
\end{theorem}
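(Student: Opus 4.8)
The plan is to treat the two assertions separately. For the $L^1$-membership of $k_{u^1}$ I would start from the complex representation~(\ref{k-cx}) of Observation~\ref{obs2}: since $|\barf|=|f|$, the factors $f/\barf$ and $\barf/f$ are unimodular, so (\ref{k-cx}) yields the pointwise bound
\[
|f|\,|k_{u^1}|\;\le\;|\fb|+|f_z|\qquad\text{wherever }\nabla u^1\neq 0.
\]
In case~(1) both $|\fb|$ and $|f_z|$ are, by~(\ref{cpx-grad}), bounded by a fixed multiple of $\|H(u^1)\|$, hence $|k_{u^1}|\le (c/C)\|H(u^1)\|\in L^2(B)\subset L^1(B)$. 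In the other three cases I would control $\fb$ through the linearised system~(\ref{operator}), using the entrywise estimate $|\fb|\le 2A_p(|f_z|+|g_z|)$ established in the proof of Theorem~\ref{hessian}, which gives
\[
|k_{u^1}|\;\le\;\frac{(1+2A_p)\,|f_z|+2A_p\,|g_z|}{|f|}.
\]
In case~(2) the numerator lies in $L^2(B)$ and $|f|>C$, so $k_{u^1}\in L^2(B)\subset L^1(B)$. In case~(3) quasiregularity of $f$ and $g$ yields $f,g\in W^{1,2}$ on $B$ (interpreting quasiregularity in $B$ as quasiregularity on a neighbourhood of $\overline{B}$, or else arguing on compact subsets of $B$), hence $f_z,g_z\in L^2(B)$ and we are back to case~(2). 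In case~(4) I would apply the Cauchy--Schwarz inequality,
\[
\int_B|k_{u^1}|\;\le\;\frac{1+2A_p}{C}\,\Big(\int_B(|f_z|+|g_z|)^2\Big)^{1/2}\Big(\int_B|z-z_0|^{-2\alpha}\Big)^{1/2},
\]
and observe that $\int_B|z-z_0|^{-2\alpha}\,dA=2\pi R^{2-2\alpha}/(2-2\alpha)<\infty$ precisely because $\alpha<1$, while the first factor is finite by quasiregularity of $f$ and $g$. The statement for $u^2$ follows by exchanging the roles of $f$ and $g$.

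For the length estimate~(\ref{len-est}) I would use the divergence theorem. Put $U=\{u^1<c\}\cap B$; if $U=\emptyset$ or $U=B$ the level curve is empty in $B$ and there is nothing to prove. Since $\{u^1=c\}\cap B$ contains no critical points of $u^1$, it is a $C^1$ one-manifold equal to $\partial U\cap B$, and the outward unit normal of $U$ along it is $\nu=\nabla u^1/|\nabla u^1|$. Writing $L:=\mathcal{H}^1(\{u^1=c\}\cap B)$, I apply the divergence theorem to the unit vector field $X=\nabla u^1/|\nabla u^1|$, which is $C^1$ off the isolated critical points and satisfies $\div X=-k_{u^1}$ by~(\ref{curv_k}); the contribution of small disks of radius $\varepsilon$ excised about the critical points lying in $U$ is $O(\varepsilon)$ because $|X|\equiv1$, and letting $\varepsilon\to0$ gives
\[
-\int_U k_{u^1}\,dA\;=\;L\;+\;\int_{\partial B\cap\overline{U}}X\cdot\nu\,d\mathcal{H}^1\;\ge\;L-2\pi R,
\]
where we used $X\cdot\nu=1$ along $\{u^1=c\}\cap B$, $X\cdot\nu\ge-1$ on $\partial B$, and $\mathcal{H}^1(\partial B)=2\pi R$. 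Rearranging, $L\le 2\pi R-\int_U k_{u^1}\le 2\pi R+\int_U|k_{u^1}|\le 2\pi R+\int_{\Omega\cap B}|k_{u^1}|$, which is~(\ref{len-est}). To legitimise the use of the divergence theorem and to secure the a priori finiteness of $L$, I would first run this argument on $B(z_0,R')$ for those radii $R'<R$ along which $\{u^1=c\}$ meets $\partial B(z_0,R')$ transversally (all but a negligible set of radii, and for which only finitely many critical points lie in $B(z_0,R')$), obtaining $\mathcal{H}^1(\{u^1=c\}\cap B(z_0,R'))\le 2\pi R'+\int_{\Omega\cap B}|k_{u^1}|$, and then let $R'\uparrow R$; the right-hand side is finite by the first part. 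The estimate for $u^2$ is identical, with $f$ replaced by $g$.

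The step I expect to be the main obstacle is the first part in cases~(3) and~(4): one has to convert quasiregularity into genuine $L^2(B)$-integrability of $f_z$ and $g_z$ — which is why quasiregularity in $B$ should be understood on a neighbourhood of $\overline{B}$, or the conclusion phrased on compact subsets — and, in case~(4), to balance the admissible vanishing rate of $|f|$ at $z_0$ against the planar integrability of $|z-z_0|^{-2\alpha}$, which is exactly what pins down the threshold $\alpha<1$. In the length estimate the only delicate points are the bookkeeping with the isolated interior critical points and the transversality/exhaustion argument needed to apply the divergence theorem rigorously.
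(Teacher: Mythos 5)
Your proposal is correct and follows essentially the same route as the paper: the pointwise bound $|f|\,|k_{u^1}|\le|\fb|+|f_z|$ from (\ref{k-cx}), elimination of $\fb$ via the linearized system (\ref{operator}) with the $A_p$ estimates, the H\"older/polar-coordinate step for cases (2)--(4), and Alessandrini's integration by parts for the length bound. Your only (harmless) deviations are that you treat case (1) directly through the Hessian without passing through the system, and that you spell out the divergence-theorem and exhaustion details that the paper delegates to the references.
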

\begin{rem}
 Note that the second parts of Assumptions (1), (2) and (3)
above can be weaken, as we need $|f|>C$ to hold only on the level
curve. Then, by the continuity of $u$ there exists an open
neighborhood, where the lower bound for $|f|$ holds as well.
Therefore, in Theorem~\ref{k-lemma} it is enough to assume that
$|f|>C$ on some open neighborhood of $\{u^1=c\}$, only.
\end{rem}
\begin{proof}[Proof of Theorem~\ref{k-lemma}]
  Formula (\ref{k-cx})  implies that
\begin{equation*}
 |f| |k_{u^1}|\leq |\fb|+|f_z|.
\end{equation*}
From the linearization of \p--harmonic system in (\ref{operator}) we infer that
$f_{\overline{z}}=A_{11}(f,g)f_z+A_{12}(f,g)g_z+\overline{A_{11}(f,g)}\overline{f_z}+\overline{A_{12}(f,g)}\overline{g_z}.$
Therefore,
\begin{equation*}
 |f| |k_{u^1}|\leq (2|A_{11}(f,g)|\,+\,1) |f_z|+2|A_{12}(f,g)||g_z|.
\end{equation*}
Recall from (\ref{A-entries}) in Appendix~\ref{App-system} that
entries of matrix $A(f, g)$ can be estimated in terms of $p$ only
and hence the following inequalities hold:
\begin{align}
 |f| |k_{u^1}|&\leq \tfrac{2}{p} |f_z|+\tfrac{2-p}{p}|g_z|\,\leq \tfrac{2}{p} (|f_z|+|g_z|) \quad \hbox{ if }\,\, 1<p<2, \nonumber \\
 |f| |k_{u^1}|&\leq \tfrac{2(p-1)}{p} |f_z|+\tfrac{p-2}{p}|g_z|\,\leq \tfrac{2(p-1)}{p} (|f_z|+|g_z|) \quad \hbox{ if }\,\, 2\leq p\leq 3, \label{lemma-est}\\
 |f| |k_{u^1}|&\leq \tfrac{p^2-p+2}{2p}|f_z|+\tfrac{(p-2)(p-1)}{p}|g_z|\,\leq \tfrac{p^2-p+2}{2p}(|f_z|+|g_z|) \quad \hbox{ if }\,\, 3<p. \nonumber
\end{align}
Denote, by $A(p)$ the maximum of constants on the right hand sides
of inequalities~(\ref{lemma-est}). Then, by the H\"older inequality
we have that
\begin{equation}\label{k-lemma-holder}
 \int_{B} |k_{u_1}| \leq 2A(p) \left(\int_{B} |f_z|^2+|g_z|^2 \right)^{\frac12} \left( \int_{B} \frac{1}{|f|^2}\right)^{\frac12}.
\end{equation}
It is then clear that Assumption (1) or Assumption (2) imply the
assertion. So is Assumption (3), as if  $f$ and $g$ are quasiregular
in $B$, then $f_z, g_z \in L^2(B)$ (see e.g. \cite{3}). If
Assumption (4) holds, then integration in polar coordinates gives us
the following estimate:
\begin{equation*}
 \int_{B(z_0, R)} \frac{1}{|f(z)|^2} dz \leq \tfrac{2\pi}{C} \int_{0}^{R}r^{1-2\alpha}dr =
 \tfrac{2\pi}{C} \tfrac{1}{2(1-\alpha)}R^{2(1-\alpha)}<\infty.
\end{equation*}
Inequality (\ref{k-lemma-holder}) then implies that $\|k_{u^1}\|_{L^1(B)}<\infty$.

 By the discussion in \cite{al} (see also Theorem 4.11 in
\cite{lin1}) we know, that if function $u^1$ defined on $\Omega$ has
isolated critical points and none of them lies on the level curve
$\{x\in \Om: u^1(x)=c \}\cap G$ for $G\subset \Om$, then the following "integration by parts"
can be performed in a set $G$ (here the definition of $k_{u^1}$ in (\ref{curv_k}) is
used as well):
\begin{equation*}
-\int_{\{x\in G: u^1(x)<c \}}\!\!k_{u^1} dz=  \int_{\{x\in G: u^1(x)<c \}}
\!\!\div\left(\frac{\nabla u^1}{|\nabla u^1|}\right) dz = \int_{\{x\in
G: u^1(x)=c \}}\!ds\,+ \int_{\{x\in \partial G: u^1(x)<c\}} \left
\langle \frac{\nabla u^1}{|\nabla u^1|}, n\!\right\rangle ds,
\end{equation*}
where $n$ denotes the outer normal vector to $\partial G$. Using the notation of Corollary~\ref{laurence} we get (cf. Formula (v) in \cite{al}) that
\begin{equation}\label{l-perim}
L(c):={\rm length}(\{x\in G: u^1(x)=c \})\leq
\int_{G}|k_{u^1}|\,+\,\hbox{perimeter of }G.
\end{equation}
 Combining this inequality for $G=\Omega\cap B(z_0, R)$ with the above integrability result of $k_{u_1}$ we obtain (\ref{len-est}).
\end{proof}
\begin{rem}\label{rem-f-g}
 In Section 3 in \cite{a1} (see also Appendix \ref{App-system} below)
we discuss an inequality relating $f_z$ and $g_z$ which implies that
$f$ is quasiregular: $|g_z|\leq C(p) |f_z|$. This condition allows
us to weaken Assumptions (3) and (4) and
require only $f$ to be quasiregular. Indeed, in the proof of Theorem~\ref{k-lemma} quasiregularity of $g$ is used only to obtain $L^2$-integrability of
$g_z$.\par
 Similar simplification occurs when using the
logarithmic representation (\ref{k-cx-log}) of $k_{u^1}$. Namely, if
$f$ is quasiregular, then components of $f$ and $-\ln|f|$ satisfy
certain elliptic equation, from which the integrability of $|\nabla
\ln |f||$ can be inferred, see discussion in \cite[Section 2]{lin1}.
The $L^1$-integrability of $k_{u^1}$ then follows immediately from
$L^1$-integrability of $f_z$ (cf. Remark~\ref{rem-ln}).
\end{rem}

\begin{rem}
 Observe that under the quasiregularity assumptions (3) or (4) of
Theorem~\ref{k-lemma} the requirement for $u^1$ to have only isolated
critical points is satisfied automatically, since quasiregular maps
are discrete and open (see e.g. discussion in \cite{lin1}).
\end{rem}
\section{Level curves and curves of steepest descent}
 In the previous section we defined $k_v$, the curvature of the level curves of function $v$.
 Similarly, one may introduce function $h_v$, the curvature of the orthogonal trajectories of the level curves
 (also called lines of steepest descent):
\[
 h_v =\frac{(v_{xx}-v_{yy})v_x v_y -v_{xy}((v_x)^2-(v_y)^2)}{|\nabla v|^3}.
\]
  By considering a function
\begin{equation}\label{phi-def}
 \phi_v =k_v+i h_v=-2\,\frac{\partial}{\partial z}\left(\frac{\barf}{|f|}\right) \quad\hbox{ for } f=v_x-iv_y
\end{equation}
we obtain yet another tool to analyze the geometry of solutions of
partial differential equations, see e.g. \cite{tal, lin1}. Theorem 3
in \cite{tal} shows, that if $v$ is a harmonic function without
critical points, then $\phi_v$ satisfies certain nonlinear PDE.
Moreover, properties of $\phi_v$ can be used to show that
$\frac{k_v}{|\nabla v|}$ and $\frac{h_v}{|\nabla v|}$ are conjugate
harmonic and that $-\ln|k_v|$ and $-\ln|h_v|$ are subharmonic.
Theorem 3 in \cite{tal} has been partially extended by Lindqvist to
the nonlinear setting of \p-harmonic functions. Namely, Theorem 4.5
in \cite{lin1} asserts that for a planar \p-harmonic function $u$ it
holds that
\begin{align}
 \phi_u |f|&= |f|^2\frac{\partial}{\partial z}\left(-\frac{1}{f}\right)
 +\frac{p-2}{p}\,|f|^2\,\Re\frac{\partial}{\partial z}\left(-\frac{1}{f}\right) \label{lin-thm45} \\
 &= \tfrac{\barf}{f}f_z+\tfrac{p-2}{2p}\left(\tfrac{\barf}{f}f_z+\tfrac{f}{\barf}\overline{f_z}\right),\quad \hbox{ when } f\not=0.
 \nonumber
\end{align}
 Since $|k_v|\leq |\phi_v|$, Lindqvist employed properties of $\phi_v$ together with the theory of
 quasiregular mappings and stream functions to prove the estimates
for the length of  level curves, similar to (\ref{len-est}) above,
in terms of integral of $\phi_v$, see \cite[(4.12)]{lin1}.
\begin{theorem}\label{obs-steepest}
 If $u=(u^1, u^2)$ is \p-harmonic, then at the points where $f, g\not=0$ it holds that
\begin{equation} \label{phi-psi2}
 |\phi_{u^1}||f| \leq C(p)(|f_z|+|g_z|),\qquad |\phi_{u^2}||g|\leq C(p)(|f_z|+|g_z|).
\end{equation}
\end{theorem}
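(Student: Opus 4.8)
The plan is to reduce the claim to the pointwise estimate already established in the proof of Theorem~\ref{k-lemma}, once we record a complex-gradient formula for $\phi_{u^1}$ analogous to formula (\ref{k-cx}) for $k_{u^1}$. Throughout I take $f,g$ to be the complex gradients of $u^1,u^2$ as in (\ref{complex}); this is legitimate because $\phi_{u^1}$ depends only on $\barf/|f|$, so the normalization of $f$ is immaterial. First I would differentiate directly in the definition (\ref{phi-def}): writing $|f|^2=f\barf$ and using $(\barf)_z=\overline{\fb}=\fb$ (the relevant quantity being a Laplacian, hence real), the product and chain rules applied to $\partial_z\big(\barf\,(f\barf)^{-1/2}\big)$ give
\[
 \phi_{u^1} = -2\,\frac{\partial}{\partial z}\!\left(\frac{\barf}{|f|}\right) = -\frac{\fb}{|f|} + \frac{\barf^2 f_z}{|f|^3},
\]
hence, at any point where $f\neq 0$,
\[
 |f|\,\phi_{u^1} = -\fb + \frac{\barf}{f}\,f_z .
\]
Taking real parts here recovers (\ref{k-cx}), so the two formulas are consistent; the same computation with $g$ in place of $f$ gives $|g|\,\phi_{u^2} = -\gb + \frac{\barg}{g}\,g_z$ where $g\neq 0$.

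Since $|\barf/f|=1$, taking moduli in the displayed identity yields the pointwise bound $|f|\,|\phi_{u^1}| \leq |\fb| + |f_z|$, which is precisely the inequality that opens the proof of Theorem~\ref{k-lemma}. From here the argument runs verbatim: substituting the linearization (\ref{operator}) of the \p-harmonic system,
\[
 \fb = A_{11}(f,g)f_z + A_{12}(f,g)g_z + \overline{A_{11}(f,g)}\,\overline{f_z} + \overline{A_{12}(f,g)}\,\overline{g_z},
\]
and using the \p-only bounds $|A_{ij}(f,g)|\leq A_p$ from (\ref{local_Ap}) and (\ref{A-entries}), one gets $|\fb|\leq 2A_p(|f_z|+|g_z|)$, whence
\[
 |f|\,|\phi_{u^1}| \leq (2A_p+1)|f_z| + 2A_p|g_z| \leq C(p)\,(|f_z|+|g_z|).
\]
In fact the sharper range-by-range estimates (\ref{lemma-est}) apply unchanged with $k_{u^1}$ replaced by $\phi_{u^1}$, so one may take $C(p)$ to be the constant $A(p)$ introduced there. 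The estimate for $\phi_{u^2}$ then follows by symmetry, interchanging $f$ and $g$ and using the second identity of the previous paragraph.

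I do not anticipate a genuine obstacle: the content is simply that $|f|\,|\phi_{u^1}|$ obeys the same elementary bound $|\fb|+|f_z|$ as $|f|\,|k_{u^1}|$ (transparent once the complex formula above is in hand, since $\phi_{u^1}$ and $k_{u^1}$ are built from the same two terms $\fb$ and $\tfrac{\barf}{f}f_z$, the latter being the real part of the former), after which Theorem~\ref{k-lemma}'s computation carries over word for word. The only place requiring care is the bookkeeping in deriving the complex representation of $\phi_{u^1}$ — checking that differentiating $(f\barf)^{-1/2}$ and collecting the $|f|^{-1}$ and $|f|^{-3}$ contributions leaves exactly $-\fb+\tfrac{\barf}{f}f_z$ after multiplying through by $|f|$ — but this parallels the computation already performed for $k_{u^1}$ in the proof of Observation~\ref{obs2}, so it poses no real difficulty.
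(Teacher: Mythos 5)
Your proposal is correct and follows essentially the same route as the paper: both derive the identity $|f|\,\phi_{u^1}=-\fb+\tfrac{\barf}{f}f_z$ from (\ref{phi-def}), substitute the linearization (\ref{operator}) for $\fb$, and invoke the $p$-only bounds (\ref{A-entries}) on the entries of $A(f,g)$, yielding the same constant $(1+2A_p)$ in front of $|f_z|$. The only cosmetic difference is that the paper substitutes for $\fb$ before taking moduli (arriving at (\ref{phi-psi})) while you take moduli first and then bound $|\fb|$ separately; the resulting estimates coincide.
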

\begin{proof}
 Computing the right-hand side of (\ref{phi-def}) we get that
\begin{equation}\label{phi-eq}
 \phi_{u^1}|f|=|f|^2\left(\frac{f_z}{f^2}-\frac{\fb}{|f|^2}\right)=-\fb+\frac{\barf}{f}f_z.
\end{equation}
From (\ref{operator}) we know that
$f_{\overline{z}}=A_{11}(f,g)f_z+A_{12}(f,g)g_z+\overline{A_{11}(f,g)}\overline{f_z}+\overline{A_{12}(f,g)}\overline{g_z}.$
 Substituting this in (\ref{phi-eq}) we obtain
\begin{align}
 \phi_{u^1}|f|&=\left(\tfrac{\barf}{f}-A_{11}(f,g)\right)f_z-A_{12}(f,g)g_z-\overline{A_{11}}(f,g)\overline{f_z}-\overline{A_{12}(f,g)}\overline{g_z} \label{phi-psi}, \\
 \phi_{u^2}|g|&=A_{21}(f, g)f_z+\left(\tfrac{\barg}{g}-A_{22}(f,g)\right)g_z-\overline{A_{21}}(f,g)\overline{f_z}-\overline{A_{22}(f, g)}\overline{g_z}. \nonumber
\end{align}
 Computing $\phi_{u^2}$ and then $\gb$ from
(\ref{operator}) results in the analogous formula for
$\phi_{u^2}|g|$. The estimates for the entries of matrix $A(f,g)$ in
(\ref{A-entries}) applied to equations (\ref{phi-psi}) immediately
give us the estimates (\ref{phi-psi2}).
\end{proof}
 The case of \p-harmonic functions can now be identified as a special case of Theorem~\ref{obs-steepest}. By using matrix (\ref{Adef}) and (\ref{Adef-coeff}) in Appendix~\ref{App-system} we may
 easily check that if $u^2\equiv 0$, and hence also $g\equiv 0$, then $A_{11}(f, 0)=\tfrac{2-p}{2p}\tfrac{\barf}{f}$, whereas $A_{12}(f,0)\equiv 0$. Thus, equation (\ref{phi-psi}) reduces to (\ref{lin-thm45}).

 In Theorem~\ref{obs-steepest} we use equation (\ref{phi-def}) to
extend the aforementioned Theorem 4.5 in \cite{lin1} to vectorial setting.
Furthermore, under Assumptions (2) or (3) or (4) of
Theorem~\ref{k-lemma} we may prove the similar integrability result
for $\phi_{u^1}$ ($\phi_{u^2}$) as obtained for $k_{u^1}$
($k_{u^2}$) and, in a consequence, obtain a counterpart of level
curves length estimate (\ref{len-est}) expressed in terms of
functions $\phi_{u^1}$ ($\phi_{u^2}$), respectively. This result
generalizes Theorem 4.11 in \cite{lin1} on the integrability of
$\phi_v$ for a \p-harmonic function $v$ in the plane.\par
 We would like to emphasize that in the setting of \p-harmonic maps the
concept of stream functions (cf. \cite{ar}), used by Lindqvist \cite{lin1} to extend
the harmonic result to the nonlinear setting, is not available and
it is only due to estimates for $A(f, g)$ in (\ref{A-entries}) for
the operator form of \p-harmonic map (\ref{operator}) that we are
able to prove the above result and the mentioned counterpart of
Theorem 4.11 in \cite{lin1}.

\section{The isoperimetric inequality for the level curves}

 Below we derive a variant of an isoperimetric inequality for \p-harmonic mappings in the plane.
 In the setting of \p-harmonic functions on annuli this type results
are due to Alessandrini \cite{al2} and Longinetti \cite{long} (cf.
Remark~\ref{rem-iso2}). Our approach is based on the work by
Laurence \cite{la} and extends \cite{al2, long}. To our best
knowledge our isoperimetric inequality is new in the setting of
coupled nonlinear systems. We hope, the techniques used here can be
applied in the framework of more general systems of PDEs. In some
parts of the proof we use the complex notation, but we do not appeal
to the complex representation of \p-harmonic system (\ref{system}).
This section is, therefore, selfcontained and independent of earlier
results of the paper. Nevertheless, similarly to previous sections,
the properties of quasiregular maps will appear to be vital in
discussion.
\par \noindent
Let $v$ be a
function from $\Om \subset \mathbb{R}^2$ to $\mathbb{R}$ and define
\begin{equation*}
\Omega_{a, b}=\{x\in \Om: a<v(x)<b\},\quad -\infty\leq a<b\leq \infty.
\end{equation*}
Recall the function of length of a level curve:
\begin{equation}\label{iso-in-length}
 L(s)=\int_{\{x\in \Om\,:\,v(x)=s\}}{\rm d}\mathcal{H}^1,
\end{equation}
where ${\rm d}\mathcal{H}^1$ stands for the {$1$-Hausdorff} measure.
However, in what follows for the sake of simplicity we will often
omit the measure in notation. Theorem 1 in \cite{la} asserts, that
for a function $v\in C^3(\overline{\Om})$ such that $v$ is constant
on the boundary of $\Om$, it holds that if $|\nabla v|\geq c$ in
$\Om_{a,b}$ for some $c>0$ and given $a, b$, then
\begin{align}
 L'(s)&=\int_{\{x\in \Om\,:\,v(x)=s\}} \div\left(\frac{\nabla v}{|\nabla v|}\right)\frac{{\rm d}\mathcal{H}^1}{|\nabla v|}, \label{L-der1} \\
 L''(s)&=\int_{\{x\in \Om\,:\,v(x)=s\}} \left[\div\left(\frac{\nabla v \Delta v}{|\nabla v|^3}\right)
+\Delta\left(\frac{1}{|\nabla v|}\right) \right]\frac{{\rm d}\mathcal{H}^1}{|\nabla v|}. \label{L-der2}
\end{align}
We would like to point out that the
lower bound assumption on $|\nabla v|$ can be weaken (see part 2 of
Remark~\ref{rem-iso} below). However, in what follows we will not
explore this observation any further. In addition to (\ref{L-der1})
and (\ref{L-der2}) we will use two other interesting formulas,
holding for $C^3$ functions (cf. (1.4) and (2.6) in \cite{al}):
\begin{align}
\Delta \ln |\nabla v| &=\div\left(\frac{\Delta v}{|\nabla v|^2}\nabla v\right), \nonumber \\
\div\left(\frac{\Delta v}{|\nabla v|^3}\nabla v\right)
+\Delta\left(\frac{1}{|\nabla v|}\right) &=\left \langle
\nabla\left(\frac{1}{|\nabla v|}\right), \frac{\Delta v}{|\nabla
v|^2}\nabla v - \frac{1}{|\nabla v|}\nabla |\nabla v| \right
\rangle. \label{L-aux}
\end{align}
To this end, we will focus our discussion on the case $v=u^1$, but
the similar results hold for $u^2$ as well.
\noindent Recall that, by $f$ and $g$ we denote the complex gradient
of $u^1$ and $u^2$, respectively.
\begin{theorem}\label{iso-thm}
Let $\Om'\subset B(z_0, R)\subset B(z_0, 4R) \Subset \Om$ and let $u=(u^1, u^2)$ be a $C^3(\overline{\Om})$-regular \p-harmonic map. Suppose that the coordinate function $u^1$ is constant on $\partial \Om'$ and that there exists a positive constant $c$ such that $|\nabla u^1|>c$ in $\Om'$. Furthermore, let us assume that $f$ and $g$ are quasiregular in $\Om'$ and consider $L(s)$ in (\ref{iso-in-length}) for $v=u_1$. Then the following formulas hold at the points, where $\nabla u^1\not =0$ and $\nabla u^2\not =0$.

\noindent If $p=2$, then
\begin{equation}\label{harmonic-case}
 (\,\ln L(s)\,)''\geq 0.
\end{equation}
Otherwise, if $p\not=2$, then
\begin{equation}\label{eq-50}
L^{\tfrac{1}{p}}(s)\left(\tfrac{p}{p-1}\,L^{\frac{p-1}{p}}(s)\right)''
\geq -\frac{C}{R^{\tfrac{4}{p}}},
\end{equation}
where $C=C(c, p, \|Du\|_{L^p(B_{2R})}, {\rm dist}(\Om',\partial \Om))$ is positive.\par
%
Moreover, the equality in (\ref{harmonic-case})
and (\ref{eq-50}) is attained if $u^2\equiv 0$ or $u^1\equiv u^2$ and for $\Om'$
a circular annulus. In such a case $u^1$ is a radial \p-harmonic
function in $\Om'$ and $C=0$ in (\ref{eq-50}).
\end{theorem}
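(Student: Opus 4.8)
The plan is to reduce everything to the two formulas \eqref{L-der1}, \eqref{L-der2} for $L'$ and $L''$ together with the auxiliary identities \eqref{L-aux}, and then to estimate the resulting integrand using the \p-harmonic equation for $u^1$ and the quasiregularity of $f$ and $g$. First I would combine \eqref{L-der2} with the second identity in \eqref{L-aux} (applied to $v=u^1$), which rewrites $L''(s)$ as an integral over $\{u^1=s\}$ of $\langle \nabla(1/|\nabla u^1|),\, \tfrac{\Delta u^1}{|\nabla u^1|^2}\nabla u^1 - \tfrac{1}{|\nabla u^1|}\nabla|\nabla u^1|\rangle$ against ${\rm d}\mathcal{H}^1/|\nabla u^1|$. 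The point of this rewriting is that the integrand becomes a quadratic-type expression in the first derivatives of $1/|\nabla u^1|$ and in $\Delta u^1$, which is exactly what one controls through the equation and quasiregularity. For $p=2$ the term $\Delta u^1$ vanishes identically (since $u^1$ is harmonic), so the integrand collapses to $-|\nabla(1/|\nabla u^1|)|^2 \cdot (\text{something nonnegative})$-type plus, more importantly, to the classical harmonic computation: one shows $L'(s)^2 \le L(s)\,L''(s)$ by Cauchy--Schwarz applied to \eqref{L-der1} and the simplified \eqref{L-der2}, and $(\ln L)'' = (L L'' - (L')^2)/L^2 \ge 0$ follows. This is the Alessandrini--Longinetti argument and I would cite \cite{al2, long, al} for the harmonic kernel of it.

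For $p\neq 2$ the equation $\Delta_p u^1 = 0$ does \emph{not} force $\Delta u^1 = 0$ — here is where the vectorial coupling enters. From the \p-harmonic system \eqref{grad_rep2} (first equation) one solves for $\Delta u^1$: writing $\Delta_p u^1 = 0$ in the form \eqref{phf} gives
\[
|\nabla u^1|^2\,\Delta u^1 = -\tfrac{p-2}{2}\,\langle \nabla u^1,\nabla|\nabla u^1|^2\rangle,
\]
so $\Delta u^1$ is comparable to $|\nabla|\nabla u^1||$, i.e.\ is itself a first-derivative quantity of $u^1$, and in particular vanishes where $u^1$ is (locally) a radial solution. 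Substituting this into the rewritten $L''$ integrand turns it into a bilinear expression purely in $\nabla(1/|\nabla u^1|)$ and $\nabla|\nabla u^1|$, plus — after using the first identity in \eqref{L-aux} and integrating the level-curve integral against $ds$ over $[a,b]$, converting $\int L''(s)\,|\nabla u^1|^{-1}\cdots$ back to a planar integral over $\Om'$ via the coarea formula — a bulk term of the shape $\int_{\Om'} (\text{curvature of }|\nabla u^1|)$. The desired conclusion \eqref{eq-50} is obtained by noting that $\bigl(\tfrac{p}{p-1}L^{(p-1)/p}\bigr)'' = L^{-1/p}\bigl(L'' - \tfrac1p \tfrac{(L')^2}{L}\bigr)$ up to the normalization, so that $L^{1/p}\bigl(\tfrac{p}{p-1}L^{(p-1)/p}\bigr)'' = L L'' - \tfrac1p (L')^2$, and this last quantity is bounded below by $-(\text{the planar bulk term})$ after a Cauchy--Schwarz on the level integrals (the exponent $p$ rather than $2$ appearing because the H\"older pairing now uses the weight $|\nabla u^1|^{p-2}$ dictated by the \p-Laplacian). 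The bulk term is then estimated by a Caccioppoli-type inequality: quasiregularity of $f$ (hence of the complex gradient of $u^1$) gives $\ln|f| \in W^{1,2}_{loc}$ with the Caccioppoli bound $\int_{B_R}|\nabla\ln|f||^2 \lesssim R^{-2}\int_{B_{2R}}$-mass, and since $|f| = \tfrac12|\nabla u^1|$ this controls $\int_{\Om'}|\nabla|\nabla u^1||^2/|\nabla u^1|^2$; combining with $|\nabla u^1| > c$ and the energy bound $\|Du\|_{L^p(B_{2R})}$ produces the constant $C(c,p,\|Du\|_{L^p(B_{2R})},{\rm dist}(\Om',\partial\Om))$ and the scaling factor $R^{-4/p}$ (the power $4/p$ coming from $R^{-2}$ squared against the Hölder exponents, or equivalently from dimensional analysis of the two derivatives of $L^{(p-1)/p}$ against area scaling $R^2$).

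The main obstacle I expect is the bookkeeping that converts the one-dimensional inequality "$L L'' - \tfrac1p(L')^2 \ge -(\text{planar integral})$" into the clean scale-invariant form \eqref{eq-50} with the correct exponent $4/p$ and a constant depending only on the listed quantities — in particular justifying the passage from level-curve integrals to the bulk integral over $\Om'$ via coarea (which needs Sard-type genericity of the level $s$, handled by the quasiregularity of $f$, which makes critical points isolated) and controlling the pairing with the weight $|\nabla u^1|^{p-2}$ uniformly using only $|\nabla u^1| > c$ and the $L^p$ energy. The verification of the equality cases is comparatively routine: if $u^2 \equiv 0$ or $u^1 \equiv u^2$ then $\Delta u^1 = 0$ reappears (in the second case because $|Du|^2 = 2|\nabla u^1|^2$ and the system decouples into $\Delta_p u^1 = 0$ with an extra harmonic constraint), and for $\Om'$ a circular annulus the known radial \p-harmonic function $u^1$ has $|\nabla u^1|$ a power of $|z|$, making both the bulk term and $(L')^2/L - $ correction vanish, so $C = 0$ and equality holds; I would record this by direct substitution into the radial profile, deferring the explicit computation to the radial examples already invoked in the paper.
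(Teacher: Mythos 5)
Your overall architecture is the right one and matches the paper's: start from Laurence's formulas (\ref{L-der1}), (\ref{L-der2}) and the identity (\ref{L-aux}), substitute the expression for $\Delta u^1$ coming from the equation, apply Cauchy--Schwarz to $(L'(s))^2/L(s)$, absorb what you can into $pL''(s)$, and control the leftover terms by quasiregularity. But there is a genuine gap at the very step where you claim the vectorial coupling enters. You solve for $\Delta u^1$ from ``$\Delta_p u^1=0$ in the form (\ref{phf})'', obtaining $|\nabla u^1|^2\Delta u^1=-\tfrac{p-2}{2}\langle\nabla u^1,\nabla|\nabla u^1|^2\rangle$. That is the \emph{scalar} \p-harmonic equation, and it is false for a coordinate function of a \p-harmonic map: the first equation of (\ref{grad_rep2}) reads $|\nabla u^1|^{4-p}\Delta_p u^1+|\nabla u^2|^2\Delta u^1+\tfrac{p-2}{2}\langle\nabla u^1,\nabla|\nabla u^2|^2\rangle=0$, so $\Delta_p u^1\neq 0$ unless $u^2$ is trivial. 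The correct substitution (equation (\ref{Lapl}) in the paper) gives
\begin{equation*}
\Delta u^1=-(p-2)\left[\frac{|\nabla u^1|^2}{|Du|^2}\left\langle\nabla|\nabla u^1|,\frac{\nabla u^1}{|\nabla u^1|}\right\rangle+\frac{|\nabla u^2|^2}{|Du|^2}\left\langle\nabla u^1,\frac{\nabla|\nabla u^2|}{|\nabla u^2|}\right\rangle\right],
\end{equation*}
and the second summand is precisely what produces the cross term $B_u$ and the extra error term $\beta(p)\int_{\{u^1=s\}}|\nabla|\nabla u^2|^2|^2/|\nabla u^1|^6$ in the paper's proof. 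That term is the genuinely new, vectorial part of the theorem; it is the only place where the quasiregularity of $g$ (an explicit hypothesis) is used, via $|\nabla|\nabla u^2|^2|\leq 16|g||g_z|$ and the $W^{1,2}$-bound for bounded quasiregular maps. Your sketch never uses $g$ at all, which is the symptom of the dropped term: with your formula for $\Delta u^1$ the argument collapses to Alessandrini's scalar case and does not prove the stated inequality.

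Two smaller points. First, the scaling $R^{-4/p}$ and the dependence of $C$ on $\|Du\|_{L^p(B_{2R})}$ do not come from a Caccioppoli inequality for $\ln|f|$ and a coarea passage; in the paper they come from Uhlenbeck's sup-gradient estimate (\ref{uhl-est}), $\sup_{B_R}|Du|\leq CR^{-2/p}\|Du\|_{L^p(B_{2R})}$, applied to $\sup|g|^2$ and to the $W^{1,2}$-norms of the bounded quasiregular maps $f,g$ — you need to invoke that estimate to get the stated constant. Second, in the equality discussion the claim that ``$\Delta u^1=0$ reappears'' when $u^2\equiv 0$ or $u^1\equiv u^2$ is wrong for $p\neq 2$: what reappears is $\Delta_p u^1=0$, and the equality case is then the radial profile $H(r)=r^{1/(1-p)}$ on an annulus with $(L')^2/L=(p-1)L''$, not a harmonic identity. (Also, $L^{1/p}\bigl(\tfrac{p}{p-1}L^{(p-1)/p}\bigr)''=L''-\tfrac{(L')^2}{pL}$, not $LL''-\tfrac1p(L')^2$; a harmless slip, but worth fixing.)
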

Theorem generalizes the linear case of harmonic functions, as well
as the case of \p-harmonic functions for $p\not =2$.
\begin{rem}\label{rem-iso2}

\item[1.] If $p=2$ and $\Om$ is a planar annulus, then we retrieve the well known harmonic case discussed for instance by  Laurence \cite[Theorem 6]{la} and Alessandrini \cite[Formula (1.3a), Theorem 1.1]{al2}.
\item[2.] If $p\not=2$ and $u^2\equiv 0$ or $u^1\equiv u^2$, then $u$ degenerates to a \p-harmonic function $u^1$.
 In such a case, the analysis of steps of the proof below allows us to
retrieve the nonlinear part of assertion in \cite[Formula (1.3b),
Theorem 1.1]{al2} with $\Lambda=p-1$. In \cite{al2}, it is assumed
that $\Om$ is an annuli. This is because for such $\Om$ it can be
showed that the gradient norm is strictly positive (the argument
goes back to Lewis \cite{lew3}), and therefore
(\ref{L-der1}) and (\ref{L-der2}) can be applied. Instead, in
Theorem~\ref{iso-thm} we localize the discussion on the subset
$\Om'\Subset \Om$ and assume that $|\nabla u^1|>c>0$ (see also the
discussion of equality in the proof of Theorem~\ref{iso-thm}).
\item[3.] In Theorem \ref{iso-thm} we may assume that $s\leq {\rm max}_{_{\partial \Om'}} u^1$
 due to the maximum principle for coordinates of \p-harmonic maps
(see Observation~\ref{comp-princ} in Appendix \ref{App-max}).
\end{rem}
\noindent Let us comment the assumptions and hypothesis of the
theorem.
\begin{rem}\label{rem-iso}
\item[1.] The $C^3$-regularity of $u$ is assumed in order to be able
to apply formulas (\ref{L-der1}) and (\ref{L-der2}) for $L'$ and $L''$, respectively.
\item[2.] According to Remark on pg. 266 in \cite{la}, the assumption that
$|\nabla u^1|>c$ can be weaken due to the Sard theorem. Namely, one
can require formulas (\ref{L-der1}) and (\ref{L-der2}) to hold only
for the regular values of $u^1$. Since, let $t$ be a regular value
of $u^1$. Then there exist $\epsilon>0$ and $c>0$ such that $|\nabla
u^1|>c$ on the set $\{x\in \Om: t-\epsilon<u^1(x)<t+\epsilon\}$.
Furthermore, by Remark~2 on pg. 267 in \cite{la}, the assumption on
lower bounds for $|\nabla u^1|$ can be replaced by the integrability
condition of a suitable power of the gradient of $u^1$.
\item[3.] We require $\Om'$ to be enough far away from the boundary of
$\Om$, since in the proof we use the following important estimate
for \p-harmonic maps due to Uhlenbeck \cite[Theorem 3.2]{4}:
\begin{equation}\label{uhl-est}
 \sup_{B_R}|Du|\leq \frac{C}{R^\frac{2}{p}} \|Du\|_{L^p(B_{2R})},
\end{equation}
where $C(p)$ is a constant in the Sobolev imbedding theorem.
\item[4.]
 One of the assumptions of Theorem \ref{iso-thm} is that $f$ and $g$
are quasiregular maps. Using computations similar to Formula (17) in
\cite[Section 3]{a1} we may determine conditions under which complex
gradients $f$ and $g$ of coordinate functions of a \p-harmonic map
are quasiregular (cf. Remark~\ref{rem-f-g}).
\end{rem}
\begin{proof}[Proof of Theorem \ref{iso-thm}]
 The definition of the \p-harmonic operator (\ref{phf}) and the
representation of the \p-harmonic system (\ref{grad_rep2}) allow us
to write the first equation of such system as follows:
\begin{equation}\label{Lapl}
 0=|\nabla u^1|^2\left(\Delta u^1+(p-2)\left \langle\nabla |\nabla u^1|, \frac{\nabla u^1}{|\nabla u^1|}\right \rangle\right)+|\nabla u^2|^2\left(\Delta u^1+(p-2)\left \langle \nabla u^1, \frac{\nabla|\nabla u^2|}{|\nabla u^2|}\right\rangle \right).
\end{equation}
From this equation we compute the Laplacian of $u^1$ and use
(\ref{curv_k}), (\ref{k-phm}) and (\ref{L-der1}) to obtain the
formula for $L'(s)$:
\begin{align}
 L'(s)&= \int_{\{\,u^1=s\}}\!\!\!\div\left(\frac{\nabla u^1}{|\nabla u^1|}\right)\frac{{\rm d}s}{|\nabla u^1|}=
 \int_{\{\,u^1=s\}} \frac{1}{|\nabla u^1|}\left( \frac{\Delta u^1}{|\nabla u^1|} - \frac{1}{|\nabla u^1|}\left\langle\nabla|\nabla u^1|, \frac{\nabla u^1}{|\nabla u^1|}  \right\rangle \right) \nonumber \\
&= -\int_{\{\,u^1=s\}} \frac{1}{|\nabla u^1|^2}\Bigg[
(p-2)\frac{|\nabla u^1|^2}{|Du|^2}\left \langle\nabla |\nabla u^1|, \frac{\nabla u^1}{|\nabla u^1|}\right\rangle+
(p-2)\frac{|\nabla u^2|^2}{|Du|^2}\left \langle\nabla u^1, \frac{\nabla|\nabla u^2|}{|\nabla u^2|}\right\rangle
\nonumber \\
&\phantom{aaaaaaaaaaaaaaaa}+\left \langle\nabla |\nabla u^1|, \frac{\nabla u^1}{|\nabla u^1|}\right\rangle \Bigg]. \label{L'}
\end{align}
Next we compute $L''(s)$. Combining (\ref{L-der2}) with
(\ref{L-aux}) and (\ref{Lapl}), together with the fact that
$\nabla(|\nabla u^1|^{-1})=-\frac{\nabla |\nabla u^1|}{|\nabla
u^1|^2} $ we obtain equation:
\begin{align}
 L''(s) &=\int_{\{u^1=s\}} \left \langle \nabla\left(\frac{1}{|\nabla u^1|}\right), \frac{\Delta u^1}{|\nabla u^1|^2}\nabla u^1 - \frac{\nabla |\nabla u^1|}{|\nabla u^1|} \right \rangle
 \frac{{\rm d}s}{|\nabla u^1|}\nonumber \\
&=\int_{\{u^1=s\}} \frac{1}{|\nabla u^1|}\Bigg \langle \nabla\left(\frac{1}{|\nabla u^1|}\right), -(p-2)\frac{|\nabla u^1|^2}{|Du|^2}\left \langle\nabla |\nabla u^1|, \frac{\nabla u^1}{|\nabla u^1|}\right\rangle\frac{\nabla u^1}{|\nabla u^1|^2} \nonumber \\
&\phantom{aAAAAAAAAAAAAAAaaaa\,}-(p-2)\frac{|\nabla u^2|^2}{|Du|^2}\left \langle\nabla u^1, \frac{\nabla|\nabla u^2|}{|\nabla u^2|}\right\rangle\frac{\nabla u^1}{|\nabla u^1|^2}  - \frac{\nabla |\nabla u^1|}{|\nabla u^1|}\Bigg \rangle. \nonumber \\
&=\int_{\{u^1=s\}} \frac{1}{|\nabla u^1|^4} \Bigg[
(p-2)\frac{|\nabla u^1|^2}{|Du|^2}\left \langle\nabla |\nabla u^1|, \frac{\nabla u^1}{|\nabla u^1|}\right\rangle^2
\nonumber \\
&\phantom{AAAAAAAAAA}+(p-2)\frac{|\nabla u^2|^2}{|Du|^2}\left \langle\nabla u^1, \frac{\nabla|\nabla u^2|}{|\nabla u^2|}\right\rangle\left \langle\nabla |\nabla u^1|, \frac{\nabla u^1}{|\nabla u^1|}\right\rangle +\big|\nabla |\nabla u^1|\big|^2\Bigg]. \nonumber
\end{align}
In order to simplify the discussion, we introduce the following
notation for the terms of $L''(s)$:
\begin{align}
A_{u}&:=(p-2) \frac{|\nabla u^1|^2}{|Du|^2} \left \langle\nabla |\nabla u^1|, \frac{\nabla u^1}{|\nabla u^1|}\right\rangle^2,\nonumber \\
 B_{u}&:=(p-2)\frac{|\nabla u^2|^2}{|Du|^2}\left \langle\nabla u^1, \frac{\nabla|\nabla u^2|}{|\nabla u^2|}\right\rangle
\left \langle\nabla |\nabla u^1|, \frac{\nabla u^1}{|\nabla u^1|}\right\rangle, \label{notation} \\
 C_{u}&:=\big|\nabla |\nabla u^1|\big|^2.  \nonumber
\end{align}
With this notation $L''(s)$ reads:
\begin{equation}\label{L''-abc}
L''(s) =\int_{\{u^1=s\}} \frac{1}{|\nabla u^1|^4} \left( A_{u}+B_{u}+C_{u} \right).
\end{equation}
Using the H\"older inequality at (\ref{L'}) we obtain the following
estimate:
\begin{align}
 \frac{(L'(s))^2}{L(s)}&\leq \int_{\{u^1=s\}} \frac{1}{|\nabla u^1|^4} \Bigg[
(p-2)^2\frac{|\nabla u^1|^4}{|Du|^4}\!\left \langle\!\nabla |\nabla u^1|, \frac{\nabla u^1}{|\nabla u^1|}\right\rangle^2 +
(p-2)^2\frac{|\nabla u^2|^4}{|Du|^4}\!\left \langle\!\nabla u^1, \frac{\nabla|\nabla u^2|}{|\nabla u^2|}\right\rangle^2 \nonumber \\
& \phantom{AA}+ \left \langle\!\nabla |\nabla u^1|, \frac{\nabla u^1}{|\nabla u^1|}\right\rangle^2 + 2(p-2)^2 \frac{|\nabla u^1|^2|\nabla u^2|^2}{|Du|^4}\left \langle\nabla |\nabla u^1|, \frac{\nabla u^1}{|\nabla u^1|}\right\rangle \left \langle\nabla u^1, \frac{\nabla|\nabla u^2|}{|\nabla u^2|}\right\rangle \nonumber \\
&\phantom{AA}+2(p-2)\frac{|\nabla u^2|^2}{|Du|^2}\left \langle\nabla |\nabla u^1|, \frac{\nabla u^1}{|\nabla u^1|}\right\rangle \left \langle\nabla u^1, \frac{\nabla|\nabla u^2|}{|\nabla u^2|}\right\rangle \label{eq57} \\
&\phantom{AA}+2(p-2)\frac{|\nabla u^1|^2}{|Du|^2}\left \langle\nabla |\nabla u^1|, \frac{\nabla u^1}{|\nabla u^1|}\right\rangle^2 \Bigg]. \nonumber
\end{align}
Using notation (\ref{notation}) we express the above inequality in a
more suitable and compact form:
\begin{align}
\frac{(L'(s))^2}{L(s)}&\leq \int_{\{u^1=s\}} \frac{1}{|\nabla u^1|^4} \bigg(
(p-2)\frac{|\nabla u^1|^2}{|Du|^2} A_{u} + C_{u}+2B_{u}+2A_{u} + E_{u}
\bigg), \label{L'-est}
\end{align}
where $E_{u}$ stands for the sum of the remaining terms in formula
(\ref{eq57}):
\begin{align}
E_{u}&= 2(p-2)^2 \frac{|\nabla u^1|^2|\nabla u^2|^2}{|Du|^4}\left \langle\nabla |\nabla u^1|, \frac{\nabla u^1}{|\nabla u^1|}\right\rangle \left \langle\nabla u^1, \frac{\nabla|\nabla u^2|}{|\nabla u^2|}\right\rangle \nonumber \\
&+(p-2)^2\frac{|\nabla u^2|^4}{|Du|^4}\left \langle\nabla u^1, \frac{\nabla|\nabla u^2|}{|\nabla u^2|}\right\rangle^2 \nonumber \\ 
&= (p-2)^2\left(\frac{|\nabla u^2|^2}{|Du|^2} \left \langle\nabla u^1, \frac{\nabla|\nabla u^2|}{|\nabla u^2|}\right\rangle
+\frac{|\nabla u^1|^2}{|Du|^2}\left \langle\nabla |\nabla u^1|, \frac{\nabla u^1}{|\nabla u^1|}\right\rangle
\right)^2 \nonumber \\
&-(p-2)^2\frac{|\nabla u^1|^4}{|Du|^4}\left \langle\nabla |\nabla u^1|, \frac{\nabla u^1}{|\nabla u^1|}\right\rangle^2. \nonumber
\end{align}
By the Schwarz inequality we have that
\begin{equation*}
 \left|\frac{|\nabla u^2|^2}{|Du|^2} \left \langle\nabla u^1, \frac{\nabla|\nabla u^2|}{|\nabla u^2|}\right\rangle\right|
\leq \frac{|\nabla u^1|}{|Du|} \frac{|\nabla |\nabla u^2|^2|}{2|Du|}\leq \frac{|\nabla |\nabla u^2|^2|}{2|Du|}.
\end{equation*}
Therefore, $E_{u}$  can be estimated as follows:
\begin{align}
E_{u}&\leq (p-2)^2\left(\frac{\big|\nabla|\nabla u^2|^2\big|}{2|Du|}+\big|\nabla|\nabla u^1|\big|\right)^2
\leq 2(p-2)^2 \frac{\big|\nabla|\nabla u^2|^2\big|^2}{|\nabla u^1|^2} + 2(p-2)^2C_{u}. \nonumber
\end{align}
In the consequence, inequality $(\ref{L'-est})$ becomes:
\begin{equation}\label{L'-est-2}
\frac{(L'(s))^2}{L(s)}\leq \int_{\{u^1=s\}} \frac{1}{|\nabla u^1|^4} \bigg(
(p-2)\frac{|\nabla u^1|^2}{|Du|^2} A_{u} + C_{u}+2B_{u}+2A_{u} + 2(p-2)^2C_{u} + 2(p-2)^2 \frac{\big|\nabla|\nabla u^2|^2\big|^2}{|\nabla u^1|^2} \bigg).
\end{equation}
We may now proceed to the crucial inequality combining (\ref{L''-abc}) and (\ref{L'-est-2}).
\begin{align}\label{L'-est-3}
\frac{(L'(s))^2}{L(s)}&\leq \int_{\{u^1=s\}} \frac{1}{|\nabla u^1|^4} \bigg(
pA_{u} + 2B_{u} +  (1+2(p-2)^2)C_{u} + 2(p-2)^2 \frac{\big|\nabla|\nabla u^2|^2\big|^2}{|\nabla u^1|^2} \bigg) \nonumber \\
&=\int_{\{u^1=s\}} \frac{1}{|\nabla u^1|^4}\, p(A_{u}+B_{u}+C_{u}) \nonumber  \\
&+\int_{\{u^1=s\}} \frac{1}{|\nabla u^1|^4}
\left ((2-p)B_{u}+ (1-p+2(p-2)^2)C_{u}+2(p-2)^2 \frac{\big|\nabla|\nabla u^2|^2\big|^2}{|\nabla u^1|^2} \right).
\end{align}
In order to complete the above estimate we need the following upper
bound on $B_{u}$.
\begin{align}
 |B_{u}|&\leq |p-2|\frac{|\nabla u^1| |\nabla u^2|}{|Du|^2}\,\big|\nabla|\nabla u^1|\big|\,\big|\nabla|\nabla u^2|\big| \nonumber \\
 & \leq |p-2|\big|\nabla|\nabla u^1|\big| \frac{|\nabla|\nabla u^2|^2|}{2|\nabla u^1|} \nonumber \\
 & \leq \frac{|p-2|}{4} \big|\nabla|\nabla u^1|\big|^2 + |p-2|\frac{\big|\nabla|\nabla u^2|^2\big|^2}{4|\nabla
 u^1|^2}.\nonumber
\end{align}
Using this inequality in (\ref{L'-est-3}) we obtain:
\begin{align}\label{L'-est-4}
 \frac{(L'(s))^2}{L(s)}&\leq \int_{\{u^1=s\}} \frac{1}{|\nabla u^1|^4} \,p(A_{u}+B_{u}+C_{u}) \nonumber \\
\phantom{AAA}&+ \left(\tfrac94(p-2)^2+1-p\right)\int_{\{u^1=s\}}
\frac{C_{u}}{|\nabla u^1|^4} + \tfrac94 (p-2)^2
\int_{\{u^1=s\}}\frac{\big|\nabla|\nabla u^2|^2\big|^2}{|\nabla
u^1|^6}.
\end{align}
Upon defining
\begin{equation*}
\alpha(p):=\tfrac94 (p-2)^2+1-p\quad \hbox{and} \quad
\beta(p):=\tfrac94 (p-2)^2,
\end{equation*}
we arrive at the inequality
\begin{align*}
 \frac{(L'(s))^2}{L(s)}&\leq p L''(s) + \alpha(p)\int_{\{u^1=s\}} \frac{\big|\nabla |\nabla u^1|\big|^2}{|\nabla u^1|^4}
+\beta(p)  \int_{\{u^1=s\}}\frac{\big|\nabla|\nabla u^2|^2\big|^2}{|\nabla u^1|^6}.
\end{align*}
With such $\alpha(p)$ and $\beta(p)$, if $p=2$ (i.e. $\alpha(p)=-1, \beta(p)=0$) we retrieve (\ref{harmonic-case}), the harmonic case of the hypothesis (see also Remark~\ref{rem-iso2} above). Indeed, for $p=2$ inequality~(\ref{L'-est-4}) reads
(cf. equation (\ref{L''-abc})):
\begin{equation*}
\frac{(L'(s))^2}{L(s)}\leq \int_{\{u^1=s\}} \frac{\big|\nabla |\nabla u^1|\big|^2}{|\nabla u^1|^4}=L''(s),
\end{equation*}
and thus
\begin{equation*}
 (L'(s))^2-L(s)L''(s)\leq 0 \Leftrightarrow (\,\ln L(s)\,)''\geq 0.
\end{equation*}
Whereas, if $p\not =2$,  we have:
\begin{align}\label{iso1}
 \left(\tfrac{p}{p-1}\,L^{\tfrac{p-1}{p}}(s)\right)''\,&=\,-\tfrac{1}{p}L^{-1-\tfrac{1}{p}}(s)\left((L'(s))^2-p L(s)L''(s)\right) \nonumber \\
&\geq -L^{-\tfrac{1}{p}}(s)\left(\frac{\alpha(p)}{p}\int_{\{u^1=s\}}
\frac{\big|\nabla |\nabla u^1|\big|^2}{|\nabla
u^1|^4}+\frac{\beta(p)}{p}\int_{\{u^1=s\}}\frac{\big|\nabla|\nabla
u^2|^2\big|^2}{|\nabla u^1|^6}\right).
\end{align}
Let us now turn to estimates for the right-hand side of (\ref{iso1})
and first analyze the last term of this inequality. Using the
complex notation and the assumption that $g$, the complex gradient
of $u^2$, is quasiregular we have that
\begin{equation*}
|\nabla|\nabla u^2|^2|=8|(|g|^2)_z|\leq
8|g|(|g_z|+|\gb|)=8|g||g_z|\left(1+\tfrac{|\gb|}{|g_z|}\right)<16|g||g_z|.
\end{equation*}
Then
\begin{equation*}
 \int_{\{u^1=s\}} \frac{|\nabla|\nabla u^2|^2|^2}{|\nabla u^1|^6}\leq
 \int_{\{u^1=s\}}\frac{4|g|^2|g_z|^2}{|f|^6}.
\end{equation*}
By the assumptions, it holds that $2|f|=|\nabla u^1|>c$. From this
and from the H\"older inequality we immediately obtain the following
estimate:
\begin{equation}\label{g-term-int}
\int_{\{u^1=s\}}\frac{4|g|^2|g_z|^2}{|f|^6}\leq
\frac{256}{c^6}\,(\sup_{\{u^1=s\}}|g|^2) \int_{\{u^1=s\}}|g_z|^2.
\end{equation}
The first factor on the right-hand side can be estimated by the
Uhlenbeck inequality (\ref{uhl-est}). Moreover, the same inequality
allows us to estimate also the second integral in
(\ref{g-term-int}), as if a quasiregular transformation $g$ is
bounded (and here $|g|<2|Du|$ in $\Om$), then
$\|g\|_{W^{1,2}(\Om')}<C(\|g\|_{L^{\infty}(\Om')}, {\rm
dist}(\Om',\partial \Om))<\infty$ (see e.g. \cite{3, lv73}). Hence,
\begin{equation}\label{g-term-iso1}
 \int_{\{u^1=s\}} \frac{|\nabla|\nabla u^2|^2|^2}{|\nabla u^1|^6}\leq
\frac{256}{c^6}\,(\sup_{\{u^1=s\}}|g|^2) \int_{\{u^1=s\}}|g_z|^2
\leq \frac{C}{R^{\frac{4}{p}}}\|Du\|^2_{L^p(B_{2R})},
\end{equation}
where $C=C(c, \|Du\|_{L^p(B_{2R})}, {\rm dist}(\Om',\partial
\Om))$.\par In order to estimate the first term on the right-hand
side of (\ref{iso1}), we again use the complex notation. Then,
quasiregularity of $f$ and the assumption that $2|f|=|\nabla u^1|>c$
imply that
\begin{equation*}
\frac{\big|\nabla |\nabla u^1|\big|^2}{|\nabla
u^1|^4}=\frac{|f_z|^2}{4|f|^4}\left(1+\frac{|\fb|}{|f_z|}\right)^2
\leq \frac{1}{c^4}|f_z|^2.
\end{equation*}
Discussion similar to that in the paragraph following
(\ref{g-term-int}) leads us to inequality
\begin{equation}\label{f-term-iso1}
 \int_{\{u^1=s\}}\frac{\big|\nabla |\nabla u^1|\big|^2}{|\nabla u^1|^4} \leq  \frac{C}{R^{\frac{4}{p}}} \|Du\|^2_{L^p(B_{2R})},
\end{equation}
with $C=C(c, \|Du\|_{L^p(B_{2R})}, {\rm dist}(\Om',\partial \Om))$.
Applying inequalities (\ref{f-term-iso1}) and (\ref{g-term-iso1})
in (\ref{iso1}), we obtain the first part of assertion:
\begin{equation*}
 \left(\tfrac{p}{p-1}\,L^{\frac{p-1}{p}}(s)\right)''
\geq -\frac{C}{R^{\frac{4}{p}}}L^{-\frac{1}{p}}(s).
\end{equation*}
Here the constant $C=C(c, p, |\alpha(p)|+|\beta(p)|,
\|Du\|_{L^p(B_{2R})}, {\rm dist}(\Om',\partial \Om))$ .
Let us now discuss the case of equality in (\ref{eq-50}). Let
$u^2\equiv 0$ or $u^1\equiv u^2$. Then the assertion of theorem
reduces to the case of \p-harmonic functions previously discussed in
\cite{al2} and $L'(s)$ and $L''(s)$ take the following form (cf.
formulas in the proof of \cite[Theorem 1.1]{al2}):
\begin{align*}
L'(s)&=-\int_{\{\,u^1=s\}} \frac{p-1}{|\nabla u^1|^2}\left \langle\nabla |\nabla u^1|, \frac{\nabla u^1}{|\nabla u^1|}\right\rangle, \\
L''(s)&=\int_{\{u^1=s\}} \frac{1}{|\nabla u^1|^4} \left[
(p-2)\left \langle\nabla |\nabla u^1|, \frac{\nabla u^1}{|\nabla u^1|}\right\rangle^2
+\big|\nabla |\nabla u^1|\big|^2\right].
\end{align*}
 By discussion in the proof of Theorem 6 in \cite{la} and Theorem 1.1
in \cite{al2}, we know that equalities in Formulas (1.3a) and (1.3b)
in \cite{al2} hold provided that the level curves are circles (see also Remark~\ref{rem-iso2}). This
observation leads us to two cases: either $\Om'$ is a ball or an
annulus. In the first case, the assumption that $u^1=k$ on $\partial
\Om'$ together with the maximum principle for \p-harmonic functions
(see e.g. \cite[Chapter 6]{hkm}) imply that $u^1\equiv k$ in $\Om'$.
If $\Om'$ is an annulus, then since $u^1$ is constant on two
components of the boundary of $\Om'$, the boundary data is
rotationally invariant. This, together with the uniqueness of
Dirichlet problem for the strictly convex \p-harmonic energy
$\int_{\Om'}|\nabla u^1|^p$ implies that the solution inside $\Om'$
is a radial \p-harmonic function
\begin{equation*}
u^1(r)=c_1 H(r)+c_2, \quad\hbox{where}\quad
H(r)=r^{\tfrac{1}{1-p}}\quad\hbox{and}\quad r=\sqrt{x^2+y^2},
\end{equation*}
whereas constants $c_1$ and $c_2$ depend on the values of $u^1$ on
$\partial \Om'$. Easy computations reveal that for such radial $u^1$
it holds that $L'(s)=-4\pi (p-1)s \ln H'(s)$, $L''(s)=8\pi (p-1)s
\left(\ln H'(s)\right)^2$ and so
\begin{equation}\label{iso-ineq-form}
\frac{(L'(s))^2}{L(s)}=(p-1)L''(s).
\end{equation}
Now, if $p=2$, then the last equation reads:
$ \frac{(L'(s))^2}{L(s)}-L''(s)=0 \Leftrightarrow  (\,\ln L(s)\,)''= 0$, resulting in the equality in (\ref{harmonic-case}).
If $p\not=2$, then (\ref{iso-ineq-form}) equivalently can be written as $\left(\tfrac{p-1}{p-2}L(s)^{\tfrac{p-2}{p-1}}\right)''=0$.
We, therefore, retrieve formula (1.3b) from \cite{al2} and the claim
follows.
\end{proof}

\appendix
\section{Appendix}
\subsection{The matrix $A(f,g)$.}\label{App-system}
 One of the main observations used throughout the paper is that one
can associate with the \p-harmonic system in the plane a quasilinear
system (\ref{system}) and a matrix $A(f,g)$  (see for instance the
work of Vekua \cite{ve} for more applications of such an approach as
well as Section 2 in Alessandrini-Magnanini \cite{alman}). For the
readers convenience we now recall the estimates for the entries of
matrix $A(f,g)$ in formula (\ref{operator}) and necessary
definitions and notation (cf.~\cite{a1} for the complete
discussion). Additionally, we improve the norm estimates for $A(f,
g)$ comparing to Theorem 2 in \cite{a1}, as now we allow
$1<p<\infty$. This extension is due to $C^2$  assumption on mapping
$u$. Indeed, in \cite{a1} we need Lemma~1 in order to infer the
higher regularity of auxiliary expression depending on $Du$,
available only for $p\geq 2$, due to techniques we use. However, if
$u$ is $C^2$, then Lemma~1 in \cite{a1} is no longer needed to
formulate system (\ref{system}). Let us also comment, that in the
setting of planar \p-harmonic functions the similar analysis for a
Sobolev solutions in the full range of parameter $p$ is possible, if
one uses the stream functions \cite{arli} or a variational approach
\cite{3}, both unknown in the vectorial setting.\par

\noindent The following matrix $A$ is introduced for the purpose of
solving system (\ref{system}) in the operator form (\ref{operator}):
\begin{eqnarray}\label{Adef}
A(f,g)\,=\, \frac{2-p}{\Phi}\!\!\left[\begin{array}{cc}
B\frac{\overline{f}}{f}+(2-p)D \phantom{111111111111111} \,\,&
\frac{\overline{g}}{\overline{f}}\big(B\frac{\barf}{f}+(2-p)D\big)
\phantom{1111111111}
\\
\\
\frac{\overline{f}}{Bg}\big(\Phi+(2-p)^2C\big)+(2-p)\frac{\overline{f}}{\overline{g}}D
\quad & \frac{\overline{g}}{Bg}\big(\Phi+(2-p)^2C \big)+(2-p)D
\end{array} \right]
\end{eqnarray}
for
\begin{eqnarray}
\Phi
&:=&\Phi(f,g)=\bigg(2p+\frac{4|g|^2}{|f|^2}\bigg)\bigg(2p+\frac{4|f|^2}{|g|^2}\bigg)
-(2-p)^2\bigg(2+\frac{\overline{g}}{g}\frac{f}{\overline{f}}+\frac{g}{\overline{g}}\frac{\overline{f}}{f}\bigg),
\nonumber \\
B&:=&B(f,g)=2p+4\frac{|f|^2}{|g|^2},\label{Adef-coeff} \\
C&:=&C(f,g)=2+\frac{\overline{g}}{g}\frac{f}{\overline{f}}+\frac{g}{\overline{g}}\frac{\overline{f}}{f},
\nonumber \\
D&:=&D(f,g)=\frac{\overline{g}}{g}+\frac{\overline{f}}{f}. \nonumber
\end{eqnarray}
As in \cite{a1} we introduce
\begin{equation*}
\Phi=
\bigg(2p+\frac{4|g|^2}{|f|^2}\bigg)\bigg(2p+\frac{4|f|^2}{|g|^2}\bigg)
-(2-p)^2\bigg(2+\frac{\overline{g}}{g}\frac{f}{\overline{f}}+\frac{g}{\overline{g}}\frac{\overline{f}}{f}\bigg)
\end{equation*}
and show that
\begin{equation*}
|\Phi|\geq 16p + 8p\bigg(\frac{|f|^2}{|g|^2}+\frac{|g|^2}{|f|^2}\bigg).
\end{equation*}
Then
\begin{align}
|A_{11}(f,g)| & =
\Bigg|\frac{(2-p)\big[\big(2p+\frac{4|f|^2}{|g|^2}\big)\frac{\overline{f}}{f}+(2-p)\big(\frac{\overline{g}}{g}+\frac{\overline{f}}{f}\big)
\big]}{\big(2p+\frac{4|g|^2}{|f|^2}\big)\big(2p+\frac{4|f|^2}{|g|^2}\big)
-(2-p)^2\big(2+\frac{\overline{g}}{g}\frac{f}{\overline{f}}+\frac{g}{\overline{g}}\frac{\overline{f}}{f}\big)}
\Bigg| \nonumber \\
& \leq  \frac{|2-p|\big[2p+\frac{4|f|^2}{|g|^2}+2|2-p|\big]}{16p +
8p\big(\frac{|f|^2}{|g|^2}+\frac{|g|^2}{|f|^2}\big)} \nonumber
\\
& \leq
\frac{2|2-p|(p+|2-p|)|g|^2|f|^2+4|2-p||f|^4}{8p(|f|^2+|g|^2)^2}
\label{A-coeff}
\\
& \leq
(|f|^2+|g|^2)^2\frac{\textrm{max}\{4|2-p|,(p+|2-p|)|2-p|\}}{8p(|f|^2+|g|^2)^2}.\nonumber
\end{align}
Now, we find that
\begin{equation}
A_p:=|A_{11}(f,g)|\leq
\begin{cases}  \label{A-entries}
 \frac{2-p}{2p} & \hbox{for}\quad 1<p<2, \\
 \frac{p-2}{2p} & \hbox{for}\quad 2\leq p\leq3, \\
 \frac{(p-2)(p-1)}{4p} & \hbox{for}\quad 3<p.
\end{cases}
\end{equation}
Similarly we find that the remaining entries $A_{12}(f,g), A_{21}(f,g), A_{22}(f,g)$ satisfy the same estimates in the corresponding ranges of $p$.

\begin{rem}\label{rem_A}
Formulas (10) and (11) in \cite{a1} are slightly different then the
above estimates for $A(f, g)$, but one can show that in fact we have
now improved estimates used in the proof of Theorem 2 in
\cite{a1}.\par
The definition of quasiregular maps in terms of the Beltrami
coefficient (\ref{beltrami}) together with the above estimates
allow us to describe when the complex gradients $f$ and $g$ are
quasiregular (see \cite[Section 3]{a1} for more details). Indeed, as
mentioned in Section~\ref{section2}, system of equations
(\ref{system}) can be solved with the help of matrix $A(f,g)$
resulting in equation (\ref{operator}):
\begin{equation*}
\left[ \begin{array}{c}
 f \\
 g
\end{array} \right]_{\overline{z}} =
A(f,g) \left[ \begin{array}{c}
 f \\
 g
\end{array} \right]_{z}\,+
\overline{ A(f,g)} \overline {\left[ \begin{array}{c}
 f \\
 g
\end{array} \right]_{z}
 }.
\end{equation*}
From this, we have that
\begin{align}
|f_{\overline{z}}| &\leq
|A_{11}(f,g)||f_z|+|A_{12}(f,g)||g_z|+|\overline{A_{11}(f,g)}||\overline{f_z}|+
|\overline{A_{12}(f,g)}||\overline{g_z}| \nonumber \\
& \leq 2A_p\left(|f_z|+|g_z|\right), \label{qr-coeff}
\end{align}
where $A_p$ is as in (\ref{A-entries}). From inequality
(\ref{qr-coeff}) we immediately obtain that
$\frac{|f_{\overline{z}}|}{|f_z|} \leq
2A_p\left(1+\frac{|g_z|}{|f_z|}\right)<1$ provided that
$\frac{|g_z|}{|f_z|}<\frac{1-2A_p}{A_p}$. The similar condition can
be derived for $g$.
\end{rem}
\subsection{Maximum principle for coordinate functions of \p-harmonic maps}\label{App-max}

The purpose of this short section is to show the maximum principle
for the coordinate functions of the \p-harmonic mapping. We use this
principle in part (3) of Remark~\ref{rem-iso2}. To our best
knowledge this result has not appeared in the literature so
far\symbolfootnote[1]{This section is adapted from Section 4.4 in
\cite{phd}. The result holds for all dimensions $n\geq 2$.}. In the
proof below we will use the approach by Leonetti and Siepe, see the
proofs of Theorems~2.1 and~2.2 in \cite{lesi}.
\begin{observ}\label{comp-princ}
 Let $u\in W^{1,p}(\Omega,{\mathbb R}^2)$ be a \p-harmonic mapping in
the domain $\Om\subset \mathbb{R}^2$. If for some $u^i$, $i=1,2$
there exists $k\in {\mathbb R}$ such that $u^i\,\leq\,k$ on
$\partial \Omega$, then $u^i\,\leq\,k$ in $\Om$.
\end{observ}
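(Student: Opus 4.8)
The plan is to combine the weak formulation of the system with the classical truncation test function, following the approach of Leonetti and Siepe (proofs of Theorems~2.1 and~2.2 in \cite{lesi}). Being a \p-harmonic map, $u$ is in particular a weak solution of (\ref{system_2}), so for each $i=1,2$ its coordinate $u^i$ satisfies
\[
\int_{\Om}|Du|^{p-2}\langle\nabla u^i,\nabla\varphi\rangle\,dx=0\qquad\text{for all }\varphi\in W^{1,p}_0(\Om).
\]
Note that for a fixed $i$ this scalar identity involves only $u^i$, even though the coefficient $|Du|$ ties the two coordinates together. It holds a priori for $\varphi\in C_c^\infty(\Om)$; since $|Du|^{p-2}|\nabla u^i|\le|Du|^{p-1}\in L^{p/(p-1)}(\Om)$ and $\nabla\varphi\in L^p(\Om)$, the integrand is in $L^1(\Om)$ and the identity extends to all $\varphi\in W^{1,p}_0(\Om)$ by density.

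The key step is to insert $\varphi:=(u^i-k)^+=\max\{u^i-k,0\}$. Interpreted in the Sobolev sense, the hypothesis $u^i\le k$ on $\partial\Om$ is exactly the statement that $\varphi\in W^{1,p}_0(\Om)$, so this is an admissible choice; moreover the chain rule for Sobolev functions gives $\varphi\in W^{1,p}(\Om)$ with $\nabla\varphi=\chi_{\{u^i>k\}}\nabla u^i$. Substituting yields
\[
\int_{\{u^i>k\}}|Du|^{p-2}|\nabla u^i|^2\,dx=0 .
\]
Since the integrand is nonnegative it vanishes a.e.\ on $\{u^i>k\}$; as $|Du|^{p-2}>0$ wherever $|Du|\neq0$, and $\nabla u^i=0$ trivially where $|Du|=0$, this forces $\nabla u^i=0$ a.e.\ on $\{u^i>k\}$, i.e.\ $\nabla\varphi=0$ a.e.\ in $\Om$.

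To conclude, a function $\varphi\in W^{1,p}_0(\Om)$ with vanishing gradient must itself vanish (e.g.\ by the Poincar\'e inequality $\|\varphi\|_{L^p(\Om)}\le C\|\nabla\varphi\|_{L^p(\Om)}=0$, or directly since a constant lying in $W^{1,p}_0(\Om)$ is zero), hence $u^i\le k$ a.e.\ in $\Om$, as claimed. I do not anticipate a real obstacle here: the only delicate points are making precise that ``$u^i\le k$ on $\partial\Om$'' means $(u^i-k)^+\in W^{1,p}_0(\Om)$, and the fact that the coefficient $|Du|^{p-2}$ is singular for $1<p<2$ --- the latter being harmless, since the argument only uses the sign of the integrand and never an upper bound on $|Du|^{p-2}$. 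A classical $C^2$ version runs identically: multiply $\div(|Du|^{p-2}\nabla u^i)=0$ by $(u^i-k)^+$ and integrate by parts, the boundary term dropping because $(u^i-k)^+$ vanishes on $\partial\Om$.
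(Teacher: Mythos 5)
Your proof is correct, but it follows a different route from the paper's. You test the weak form of the $i$-th equation of the system with the truncation $(u^i-k)^+$, obtain $\int_{\{u^i>k\}}|Du|^{p-2}|\nabla u^i|^2=0$, and conclude via the Poincar\'e inequality; this is the classical Stampacchia-type maximum-principle argument, and it only uses the fact that $u$ is a weak solution of the Euler--Lagrange system. The paper instead argues variationally, in the spirit of Leonetti--Siepe: it forms the competitor $\tilde u=(u^1-\max\{u^1-k,0\},\,u^2)$, which has the same trace, observes that $|D\tilde u|^p<|Du|^p$ a.e.\ on $\{u^1>k,\ \nabla u^1\neq 0\}$, and invokes uniqueness of the \p-harmonic minimizer to force that set to be null, finishing with the same Poincar\'e step. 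The two proofs share the truncation and the endgame but differ in the middle: yours never appeals to minimality or uniqueness (so it applies verbatim to any weak solution of the system, and you correctly note that the possible singularity of $|Du|^{p-2}$ for $1<p<2$ is harmless since $|Du|^{p-2}|\nabla u^i|^2\le |Du|^p\in L^1$), while the paper's never writes down the weak formulation and instead leans on the strict convexity of the energy. Both require the same interpretation of the boundary inequality, namely that $(u^i-k)^+\in W^{1,p}_0(\Omega)$, which you rightly single out as the one point needing care.
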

\noindent Before giving the proof, let us state the following
remark.
\begin{rem}
 Let $v\in W^{1,p}(\Om, \mathbb{R})$. Then the assumption $v\leq l$ on $\partial \Omega$ means that there exists a sequence $\{v_k\}$ of
 a Lipschitz functions on the closure of $\Omega$ such that $v_k(x)\leq l$ for every $x \in \partial \Omega$,
 for each $k\in {\mathbb N}$ and $\|v-v_k\|_{_{W^{1,p}(\Omega,{\mathbb R})}}\to 0, \hbox{ as } k \to \infty$.
\end{rem}
\begin{proof}[Proof of Theorem~\ref{comp-princ}]
Without loss of generality, let us assume that $i=1$. Consider the
following perturbation of mapping $u$:
$$ \tilde{u}\,=\,(u^1+\phi, u^2),$$
where $\phi\,=\,-{\rm max}\{u^1-k, 0\}.$ As $\max\{u^1-k,
0\}\,\in\,\,W_{0}^{1,p}\,(\Omega,\,\R)$ we have that $u$ and
$\tilde{u}$ have the same trace. Define sets
\begin{equation}
\Omega_1\,=\,\{u^1\,\leq\, k\}\cup\{u^1>k,\,\nabla u^1\,=\,0\}, \qquad
\Omega_2\,=\,\Omega\setminus\Omega_1. \nonumber
\end{equation}
Then
\begin{equation}
|D\tilde{u}|^{p}\,=\,|Du|^{p}\quad {\rm on }\,\,\Omega_1\, \hbox{ a.e.}\quad \hbox{ and }\quad
|D\tilde{u}|^{p}\,<\,|Du|^{p}\quad {\rm on }\,\,\Omega_2\, \hbox{ a.e.}
\nonumber
\end{equation}
Uniqueness of the \p-harmonic minimizer implies that $|\Omega_2|\,=\,0$. From this we obtain that $\nabla \phi =0$ a.e. in $\Omega$. Since $\phi \in W^{1,p}_{0}(\Omega, \mathbb{R})$ we have by the Poincar\'e inequality
$$\|\phi\|_{L^p(\Omega)}\,\leq\,c(p)|\Omega|^{\frac{1}{2}}\|\nabla \phi\|_{L^p(\Omega)}\,=0.$$
Thus $\phi\,=\,0$ a.e. in $\Omega$ and the definition of $\phi$ immediately implies that $|\{x\in \Omega\,|\, u^1>k\}|\,=\,0$, completing the proof.
\end{proof}
\subsection{Radial \p-harmonic surfaces and Theorem~\ref{hessian}}\label{radial-hessian}

In Remark~\ref{hessian-rem-rad} we mention the difficulty with
finding wide classes of nontrivial examples when dealing with the
\p-harmonic world. The class of \p-harmonic solutions that comes
most in handy is the one of radial transformations. In
Observation~\ref{obser-rad} we use radial \p-harmonic surfaces to
show that Theorem~\ref{hessian} may fail beyond the range of parameter
$p\in \left<\frac43, 2+\sqrt{2}\right>$. Let
\[
u(x,y)=(u^1, u^2)= (H(r)x, H(r)y),\quad \hbox{for} \quad
r=\sqrt{x^2+y^2}
\]
be a radial map in a planar domain. For such $u$ the \p-harmonic
system (\ref{system_2}) reduces to a single ODE:
\begin{equation}\label{radial-phm}
 (p-1)H''(H')^2r^3+(2p-1)(H')^3r^2+2(p-1)HH'H''r^2 +(5p-4)H(H')^2r + p H^2H''r + 3pH^2H'=0.
\end{equation}
The following formulas hold for $u^1$:
\begin{align}
 \nabla u^1&= \big(H'(r)\frac{x^2}{r}+H(r), H'(r)\frac{xy}{r}\big),\nonumber \\
u^1_{xx}&=H''(r)\frac{x^3}{r^2}+H'(r) \frac{2y^3+3x^2y}{r^3},\quad
u^1_{xy}=H''(r)\frac{x^2y}{r^2}+H'(r) \frac{y^3}{r^3},\quad
u^1_{yy}=H''(r)\frac{xy^2}{r^2}+H'(r)\frac{x^3}{r^3}.\nonumber
\end{align}
Similarly we find $\nabla u^2$ and $u^2_{xx}, u^2_{xy}, u^2_{yy}$. After lengthy computations, we
arrive at equations for Hessian determinants of $u^1$ and $u^2$:
\begin{align}
 \det H(u^1)&=H'(r)H''(r)\frac{x^2}{r}+(H'(r))^2\frac{2x^2-y^2}{r^2}, \nonumber \\
 \det H(u^2)&=H'(r)H''(r)\frac{y^2}{r}+(H'(r))^2\frac{2y^2-x^2}{r^2}. \nonumber
\end{align}
\begin{observ}\label{obser-rad}
 If $p>6+4\sqrt{2}$, then there exist constant $c>1$ and radial \p-harmonic map $u=(u^1, u^2)=H(r)(x,y)$ with $H'\leq 0$ defined in the domain
$\Om\subset \{(x,y)\in \mathbb{R}^2 : c y^2> x^2 > y^2 \}$ such that
 $\det H(u^2)\geq 0$ and $\det H(u^1)\geq 0$.
Thus, Theorem~\ref{hessian} does not hold in general.
\end{observ}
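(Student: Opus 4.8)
The plan is to exhibit an explicit local radial solution of the ODE (\ref{radial-phm}) along which the \emph{sum} $\det H(u^1)+\det H(u^2)$ is strictly positive, and then to cut out an angular sector of a thin annulus on which the two summands are individually nonnegative. The starting point is the identity obtained by adding the two Hessian--determinant formulas displayed just above,
\[
\det H(u^1)+\det H(u^2)=(H')^2+rH'H''=H'\,(rH')',
\]
together with the substitution $s:=rH'$ (so that $(rH')'=s'$ and $H'=s/r$). Inserting this into (\ref{radial-phm}) and collecting the terms carrying $H''$, the bracket multiplying $H''r$ turns out to be the quadratic form $M:=(p-1)s^2+2(p-1)Hs+pH^2$, while the remaining terms simplify to $-\tfrac{s}{r}N$ with $N:=ps^2+(3p-2)Hs+2pH^2$. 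Viewed as a quadratic in $s$, $M$ has leading coefficient $p-1>0$ and discriminant $-4(p-1)H^2<0$, so it is positive definite; hence (\ref{radial-phm}) becomes $s'=-sN/(rM)$ and
\[
\det H(u^1)+\det H(u^2)=H's'=\tfrac{s}{r}\bigl(-\tfrac{sN}{rM}\bigr)=-\frac{s^2N}{r^2M},
\]
which has the sign opposite to that of $N$.

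The threshold enters here. Regarded as a quadratic in $s$ (with $H>0$ fixed), $N$ has discriminant $\bigl((3p-2)^2-8p^2\bigr)H^2=(p^2-12p+4)H^2$, which is positive precisely when $p>6+4\sqrt2$ (we are in the range $p>1$). For such $p$ the roots of $N$ have product $2H^2>0$ and sum $-(3p-2)H/p<0$, so they equal $\lambda_-H<\lambda_+H<0$ for suitable reals $\lambda_-<\lambda_+<0$, and $N<0$ exactly when $rH'/H\in(\lambda_-,\lambda_+)$ — a regime that in particular forces $H'<0$. I would then write (\ref{radial-phm}) in normal form $H''=F(r,H,H')$ (legitimate since $M>0$) and solve the initial value problem $H(r_0)=H_0>0$, $H'(r_0)=\sigma_0H_0/r_0$ for some fixed $r_0>0$ and $\sigma_0\in(\lambda_-,\lambda_+)$. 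Standard ODE existence and uniqueness yields a $C^\infty$ solution on an interval $(r_1,r_2)\ni r_0$, and after shrinking that interval I keep $H>0$ and $rH'/H\in(\lambda_-,\lambda_+)$ throughout $(r_1,r_2)$. On the annulus $\{r_1<\sqrt{x^2+y^2}<r_2\}$ the map $u=(u^1,u^2)=H(r)(x,y)$ is, by the reduction recalled before (\ref{radial-phm}), a classical, non-affine \p-harmonic map with $H'<0$ and $\det H(u^1)+\det H(u^2)>0$; moreover $\nabla u^1,\nabla u^2\neq 0$ there, so the hypotheses of Theorem \ref{hessian} other than the range of $p$ are met.

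It remains to choose the sector and the constant $c$. Writing $P:=rH'H''$ and $Q:=(H')^2>0$ and passing to polar coordinates $x=r\cos\theta$, $y=r\sin\theta$, the determinant formulas become $\det H(u^1)=(P+3Q)\cos^2\theta-Q$ and $\det H(u^2)=(P+3Q)\sin^2\theta-Q$, where $P+3Q=(P+Q)+2Q>0$ because $P+Q$ is the positive sum above. Hence $\det H(u^1)\geq 0$ and $\det H(u^2)\geq 0$ hold simultaneously exactly when $\cos^2\theta\in\bigl[\tfrac{Q}{P+3Q},\tfrac{P+2Q}{P+3Q}\bigr]$, equivalently when $\tfrac{x^2}{y^2}\in\bigl[\tfrac{Q}{P+2Q},\tfrac{P+2Q}{Q}\bigr]$, an interval containing $1$ in its interior since $P+Q>0$. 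On a slightly smaller annulus $[r_1',r_2']\Subset(r_1,r_2)$ put $m:=\min_{[r_1',r_2']}\tfrac{P+2Q}{Q}>1$, pick any $c\in(1,m)$, and set
\[
\Om:=\bigl\{(x,y):\ r_1'<\sqrt{x^2+y^2}<r_2',\ \ 1<\tfrac{x^2}{y^2}<c\bigr\}\subset\{cy^2>x^2>y^2\}.
\]
At every point of $\Om$ one has $\tfrac12<\cos^2\theta$ and $\tfrac{x^2}{y^2}\in(1,c)\subset\bigl(\tfrac{Q}{P+2Q},\tfrac{P+2Q}{Q}\bigr)$, which (using $P+Q>0$ once more) forces $\tfrac{Q}{P+3Q}<\cos^2\theta<\tfrac{P+2Q}{P+3Q}$, so in fact $\det H(u^1)>0$ and $\det H(u^2)>0$ throughout $\Om$ — flatly contradicting the conclusion of Theorem \ref{hessian}.

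The only genuinely delicate step is the algebra of the first paragraph: recognizing that the coefficient of $H''$ in (\ref{radial-phm}) equals $r$ times the positive definite form $M$, and that the remaining terms collapse to $-sN/r$. Once the identity $\det H(u^1)+\det H(u^2)=-s^2N/(r^2M)$ is in hand, the discriminant computation pinning the threshold to $p>6+4\sqrt2$ and the elementary trigonometric bookkeeping of the third paragraph are routine.
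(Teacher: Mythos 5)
Your proof is correct, and at its core it is the same computation as the paper's, just repackaged. The paper writes the two sign conditions directly as $-2-\tfrac{H''r}{H'}\le -\tfrac{x^2}{y^2}$ and $-2-\tfrac{H''r}{H'}\le -\tfrac{y^2}{x^2}$, substitutes the ODE to get the quadratic $(1+c(p-1))t^2+(p-2)t+1+c\le 0$ in $t=(H'r+H)/H$, and extracts $p>6+4\sqrt2$ from solvability with $c>1$. Your condition $N\le 0$ is exactly that quadratic at $c=1$ after the shift $t=s/H+1$ (indeed $p(t-1)^2+(3p-2)(t-1)+2p=pt^2+(p-2)t+2$, whose discriminant is $p^2-12p+4$, the same expression the paper's discriminant condition reduces to at $c=1$), and your sector condition $\tfrac{x^2}{y^2}\le\tfrac{P+2Q}{Q}$ is literally the paper's second inequality rewritten, since $\tfrac{P+2Q}{Q}=2+\tfrac{rH''}{H'}$. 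What your organization buys is worth noting: the identity $\det H(u^1)+\det H(u^2)=H'(rH')'=-s^2N/(r^2M)$ with $M=(p-1)(s+H)^2+H^2>0$ isolates the threshold as the discriminant of a single fixed quadratic rather than a $c$-parametrized family; the polar form $(P+3Q)\cos^2\theta-Q$ makes the admissible sector and the compactness argument for $c\in(1,m)$ transparent; and, most substantively, your IVP step (normal form legitimate because $M\ge H^2>0$, with the open conditions $H>0$ and $rH'/H\in(\lambda_-,\lambda_+)$ persisting on a subinterval) actually \emph{produces} the solution $H$, a point the paper leaves implicit when it says one ``may determine conditions for $H$ and $H'$'' from (\ref{rad-quad}). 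All of your algebra checks out, including the collection of the $H''$-terms into $rM$ and the remainder into $\tfrac{s}{r}(M-(-N))$.
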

\begin{proof}
Using the above computations for Hessians of $u^1$ and $u^2$, the
proof reduces to finding $u$ with the following properties:
\begin{align}
 \det H(u^2)\geq 0 \Leftrightarrow (H'\geq 0 \hbox{ and } H''r+H'(2-\frac{x^2}{y^2})\geq 0) \hbox{ or } (H'\leq 0 \hbox{ and } H''r+H'(2-\frac{x^2}{y^2})\leq 0), \nonumber \\
 \det H(u^1)\geq 0\Leftrightarrow (H'\leq 0 \hbox{ and } H''r+H'(2-\frac{y^2}{x^2})\leq 0) \hbox{ or } (H'\geq 0 \hbox{ and } H''r+H'(2-\frac{y^2}{x^2})\geq 0). \label{hes-ineq}
\end{align}
For the simplicity of discussion, from now on we will assume that
$H>0$. Such an assumption is justified by the fact that if $u$ is
\p-harmonic, then so is $\tilde{u}=u+(Cx,Cy)$ for a constant $C$.
Thus, by shifting $H$ by a constant we may ensure the positivity of
$H$. Therefore, condition~(\ref{hes-ineq}) together with requirement
that $H'\leq 0$ reduce the hypothesis of observation to showing the
following inequalities:
\begin{equation}\label{rad-cond}
\begin{cases}
 -2-\frac{H''r}{H'} & \leq -\frac{x^2}{y^2} \\
 -2-\frac{H''r}{H'} & \leq -\frac{y^2}{x^2}.
 \end{cases}
\end{equation}
Furthermore, assumption that $c>1$ and definition of $\Om$ allow us
to check only that
\begin{equation*}
-2-\frac{H''r}{H'} \leq -c < -\frac{x^2}{y^2}\quad \left(< -1 \leq
-\frac{y^2}{x^2}\right).
\end{equation*}
From (\ref{radial-phm}) we find that
\begin{equation*}
 -2-\frac{H''r}{H'}=\frac{(H'r+H)^2+(p-2)H(H'r+H)+H^2}{(p-1)(H'r+H)^2+H^2}.
\end{equation*}
Upon defining $t=\frac{H'r+H}{H}$, condition $
-2-\frac{H''r}{H'}\leq -c$ reads:
\begin{equation}\label{rad-quad}
 (1+c(p-1))t^2+(p-2)t +1+c\leq 0.
\end{equation}
Solutions exist provided that $4(1-p)c^2-4pc+p(p-4)\geq 0$. In such
a case $c$ must satisfy
$-\frac{p}{2(p-1)}-\frac{|p-2|}{2(p-1)}\sqrt{p}\leq c
\leq-\frac{p}{2(p-1)}+\frac{|p-2|}{2(p-1)}\sqrt{p}$. Requiring that
$c>0$ gives us condition $p>4$, while $c>1$ holds if
$p>6+4\sqrt{2}$. By solving inequality (\ref{rad-quad}) for $t$ we
may determine conditions for $H$ and $H'$ under which mapping $u$
satisfies (\ref{rad-cond}). Thus, the proof of observation is
completed.
\end{proof}

\vspace{1cm}
\noindent
{\sc Tomasz Adamowicz}\\
{\sc Institute of Mathematics, Polish Academy of Sciences}\\
{\sc Warsaw, 00-656}\\
{\sc Poland}\\
\texttt{T.Adamowicz@impan.pl}
\end{document}